\setlist{nosep} 
    \newtheorem*{rep@theorem}{\rep@title}
    \newcommand{\newreptheorem}[2]{%
    \newenvironment{rep#1}[1]{%
    \def\rep@title{#2 \ref{##1}}%
    \begin{rep@theorem}}%
    {\end{rep@theorem}}}
\newtheorem{theorem}{Theorem}[section]
\newtheorem{proposition}[theorem]{Proposition}
\newtheorem{corollary}[theorem]{Corollary}
\newtheorem{lemma}[theorem]{Lemma}
\newtheorem{Set up}[theorem]{Set-up}
\newtheorem{question}{Question}
\newtheorem{introthm}{Theorem}
\newtheorem{introcor}[introthm]{Corollary}
\theoremstyle{definition}
\newtheorem{definition}[theorem]{Definition}
\newtheorem*{answer*}{Answer}
\newtheorem*{application*}{Application}
\DeclarePairedDelimiterX{\Norm}[1]{\lVert}{\rVert}{#1}
\theoremstyle{definition}
\renewcommand*{\backrefalt}[4]{\ifcase #1 (Not cited).\or (Cited p.~#2).\else (Cited pp.~#2).\fi} 
\newcounter{shcount}
\newcounter{enumlabelcount}
\newcommand\enumlabel[1][]{\item[#1]
    \refstepcounter{enumlabelcount}\def\@currentlabel{#1}}\makeatother
\newcounter{claimcount}
\newenvironment{claim*}[1]{\par\vspace{2mm}\noindent
    \underline{Claim:}\hspace{2mm}#1}{}
\title{Morse subsets of injective spaces are strongly contracting}
\author{Alessandro Sisto}
	\address{Maxwell Institute and Department of Mathematics, Heriot-Watt University, Edinburgh, UK}
	\email{a.sisto@hw.ac.uk}
		\address{Department of mathematics, University of Toronto, Toronto, Ontario, Canada M5S 2E4}
	\email{abdul.zalloum@utoronto.ca}
\author{Abdul Zalloum}
\begin{document}
\maketitle

\begin{abstract} 
We show that a quasi-geodesic in an injective metric space is Morse if and only if it is strongly contracting. Since mapping class groups and, more generally, hierarchically hyperbolic groups act properly and coboundedly on injective metric spaces, we deduce various consequences relating, for example, to growth tightness and genericity of pseudo-Anosovs/Morse elements. Moreover, we show that injective metric spaces have the Morse local-to-global property and that a non-virtually-cyclic group acting properly and coboundedly on an injective metric space is acylindrically hyperbolic if and only if contains a Morse ray. We also show that strongly contracting geodesics of a space stay strongly contracting in the injective hull of that space.
\end{abstract}

\section{Introduction} 
\subsection*{Hyperbolic-like directions} There are various notions that aim to capture the notion of ``hyperbolic-like" directions in a space or group (for sampling, see \cite{charneysultan:contracting,ACGH,QRT20,Russell2018ConvexityIH,DZ22,Tran2019}) meaning geodesics that behave like geodesics in a hyperbolic space. The weakest such notion is that of a \emph{Morse} geodesic, while the strongest of these notions is that of a \emph{strongly contracting} geodesic. The advantage of the former is that it is quasi-isometry invariant, while the latter has much stronger consequences, for example relating to acylindrical hyperbolicity \cite{Bestvina2014,EikeZalloum}, growth-tightness \cite{Arzhantseva2015}, growth rates of subgroups \cite{Xabi}, and genericity of strongly contracting elements \cite{Yang2018}; see the discussion below. Unfortunately, in general metric spaces these two notions are not equivalent, and in fact they are not even equivalent for Cayley graphs of mapping class groups \cite{Rafi2021}. It is therefore desirable to identify classes of metric spaces where the notions are equivalent, as is the case for CAT(0) spaces \cite{charneysultan:contracting,Cashen:Morse_CAT0} and 1-skeleta of CAT(0) cube complexes \cite{Genevois2020}.

\subsection*{Injective spaces} A metric space is said to be \emph{injective} if any collection of pairwise intersecting balls have a total intersection. A related notion is that of \emph{Helly graph}, a graph where this property is required to hold for combinatorial balls. Injectivity has drawn increasing interest in recent years and it is meant to capture some form of nonpositive curvature. Indeed, for example, these spaces admit bicombings with various desirable properties \cite{LANG2013,Descombes2014}. Moreover, \emph{Helly groups}, that is, groups acting properly and cocompactly on Helly graphs, have many remarkable properties such as biautomaticity \cite{CCGHO20}. These groups form a large class including cubulated groups, finitely presented graphical C(4)-T(4) small cancellation groups \cite{CCGHO20}, weak Garside groups of finite type \cite{Huang2021}. Injectivity has also played a crucial role in the proof of semi-hyperbolicity of mapping class groups \cite{haettelhodapetyt:coarse} (see also \cite{DMS20}). Namely, Haettel-Hoda-Petyt showed that hierarchically hyperbolic groups, which include mapping class groups (see \cite{HHS1} and references therein), admit proper cobounded actions on injective metric spaces.
Our main theorem is the following.



\begin{introthm}\label{thm:main}
A subset $Y$ of an injective metric space $X$ is Morse if and only if it is strongly contracting, quantitatively.
\end{introthm}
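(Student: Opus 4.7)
The direction that strongly contracting implies Morse is standard and holds in arbitrary metric spaces: a $(K,C)$-quasi-geodesic with endpoints in $Y$ cannot make long excursions, as the bounded-diameter projections of balls along such an excursion would be incompatible with the quasi-geodesic inequality. The resulting Morse gauge is explicit in $K$, $C$, and the contraction constant, and I would record it in a short preliminary lemma.

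For the converse, the plan is to use Lang's theorem that an injective metric space $X$ admits a reversible, conical geodesic bicombing $\sigma$, i.e., constant-speed geodesics $\sigma(x,y)\colon[0,1]\to X$ satisfying
\[ d(\sigma(x,y)(t),\sigma(x',y')(t)) \leq (1-t)\, d(x,x') + t\, d(y,y'). \]
Define $\pi_Y\colon X\to Y$ as a coarse nearest-point projection. Fix a ball $B(z,r)$ disjoint from $Y$ and points $x,y\in B(z,r)$; the goal is to bound $d(\pi_Y(x),\pi_Y(y))$ by a constant $D$ depending only on the Morse gauge. The strategy is to form the concatenation
\[ \alpha := \sigma(\pi_Y(x),x) \ast \sigma(x,y) \ast \sigma(y,\pi_Y(y)), \]
a path from $\pi_Y(x)$ to $\pi_Y(y)$, both of which lie in $Y$. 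If one can show $\alpha$ is a $(K_0,C_0)$-quasi-geodesic with $K_0,C_0$ depending only on the Morse gauge of $Y$, then the Morse hypothesis places $\alpha\subseteq\calN_{M_0}(Y)$ for $M_0=M(K_0,C_0)$, while conicality of $\sigma$ confines the middle segment $\sigma(x,y)$ to a neighbourhood of $\{x,y\}$ of radius comparable to $r$, placing it at distance at least $d(z,Y)-2r$ from $Y$. Taking $D$ larger than a controlled function of $M_0$ handles balls sufficiently far from $Y$, while the small-$d(z,Y)$ case is immediate from the triangle inequality.

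The main obstacle is showing that $\alpha$ is a quasi-geodesic. I would argue by contradiction: if $d(\pi_Y(x),\pi_Y(y))$ is much smaller than the length of $\alpha$, then the bicombing geodesic $\beta := \sigma(\pi_Y(x),\pi_Y(y))$ is a short curve with endpoints in $Y$, so by the Morse hypothesis applied to $\beta$ it lies in a bounded neighbourhood of $Y$. Using conicality, one can bound the distance from each point on $\sigma(\pi_Y(x),x)$ to the corresponding time-slice of $\beta$ (and similarly for the last segment), thereby locating $x$ and $y$ themselves near $Y$, contradicting membership in $B(z,r)$. The one-sided nature of conical convexity, as opposed to the two-sided CAT(0) convexity used in the Charney--Sultan and Cashen arguments, is what forces the more delicate bookkeeping, and I expect this is where most of the technical effort goes. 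All constants produced will depend only on the Morse gauge, yielding the quantitative form of the theorem.
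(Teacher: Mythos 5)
Your reduction of the problem (handle balls at bounded distance from $Y$ by the triangle inequality, and for far balls try to exploit a path through $x,y$ with endpoints on $Y$) is reasonable in outline, and your use of conicality to keep $\sigma(x,y)$ inside $B(z,r)$ is correct. But the central step --- showing that $\alpha=\sigma(\pi_Y(x),x)\ast\sigma(x,y)\ast\sigma(y,\pi_Y(y))$ is a $(K_0,C_0)$-quasi-geodesic with constants depending only on the Morse gauge --- cannot work, because it is false exactly in the regime you need it. If the theorem holds and the ball is far from $Y$, then $d(\pi_Y(x),\pi_Y(y))$ is bounded by a constant while the length of $\alpha$ is at least $2(d(z,Y)-r)$, which is unbounded; no path whose endpoints are uniformly close but whose length is arbitrarily large is a uniform quasi-geodesic. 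Your argument for quasi-geodesity is aimed at ruling out precisely the conclusion you want (that the two projections are close). The technical mechanism also fails: conicality compares $\sigma(\pi_Y(x),x)(t)$ with $\beta(t)=\sigma(\pi_Y(x),\pi_Y(y))(t)$ only up to the error $t\,d(x,\pi_Y(y))$, which is vacuous near $t=1$ and in particular does not place $x$ or $y$ near $Y$; and even if it did, ``$x$ near $Y$'' is not in tension with $x\in B(z,r)$, since the ball may sit just barely off $Y$. So there is no contradiction to be had where you claim one.

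A second, structural warning sign: your proposal uses injectivity only through the existence of a conical bicombing, never through the binary intersection property of balls. The theorem is known to fail in general geodesic spaces (indeed in Cayley graphs of mapping class groups), so injectivity must enter essentially somewhere, and a conical bicombing alone is a much weaker hypothesis. In the paper the intersection property enters through the tripod lemma (Lemma \ref{lem:tripod}): any three points of an injective space admit an isometrically embedded tripod (this already fails in $\mathbb{R}^2$). The tripod lemma yields the reverse triangle inequality $d(x,z)\geq d(x,y)+d(y,z)-2M(1,0)-2$ for $y\in\pi_Y(x)$, $z\in Y$ (Lemma \ref{lem:one_geod}), which upgrades to the statement that \emph{every} geodesic from $x$ to a point of $Y$ passes uniformly close to $\pi_Y(x)$ (Lemma \ref{lem:all_geod}, via the concatenation Lemma \ref{lem:QRT} producing a $(9,0)$-quasi-geodesic); the Charney--Sultan endgame then bounds $d(\pi_Y(x),\pi_Y(y))$ by applying this twice together with the fact that bicombing geodesics between points of a ball stay in the ball. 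Your proposal is missing any substitute for this chain, and that is where the real work lies.
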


By ``quantitatively" we mean that the contraction constant and the Morse gauge determine each other. We note that since Helly graphs are coarsely dense in their injective hulls (see \cite[Proposition 3.12]{CCGHO20}), the theorem also holds for those.

A recent result related to Theorem \ref{thm:main} is that Morse elements are strongly contracting in standard Cayley graphs of Garside groups of finite type modded out by their centres \cite{Garsidestrong}.

We note that Morseness for $(9,0)$--quasi-geodesics actually suffices for strong contraction, see Corollary \ref{cor:9_0}. Interestingly, this is better than the case of $CAT(0)$ spaces, see the discussion below the aformentioned corollary.

\subsection*{Injective hulls} Before discussing consequences of our main theorem, we state another result which completes the picture of what happens when passing to the injective hull (but will not be discussed further below). We recall that any metric space $X$ embeds isometrically into a canonical injective metric space $E(X)$ called its \emph{injective hull}, which should be thought of as the smallest injective space containing $X$. Theorem \ref{thm:main} implies that in order for a geodesic ray $\gamma$ in $X$ to be Morse in $E(X)$, the geodesic $\gamma$ needs to be strongly contracting in $X$. Conversely, we show that if $\gamma$ is indeed strongly contracting in $X$, then it stays strongly contracting in $E(X)$:

\begin{introthm}
\label{thm:contracting_survives_intro}
Let $X$ be a geodesic metric space with injective hull $e:E\to E(X)$. Let $\gamma\subseteq X$ be a strongly contracting geodesic. Then $e(\gamma)$ is strongly contracting in $E(X)$, quantitatively.
\end{introthm}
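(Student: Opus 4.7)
The plan is to reduce the statement to a Morse property and invoke Theorem~\ref{thm:main}. By that theorem applied to the injective space $E(X)$, strong contraction of $e(\gamma)$ in $E(X)$ is equivalent, quantitatively, to $e(\gamma)$ being Morse in $E(X)$. So it suffices to show that $e(\gamma)$ is Morse in $E(X)$, with the Morse gauge determined by the contraction constant of $\gamma\subseteq X$.

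A key structural tool comes from the fact that $\gamma$ is isometric to an interval of $\mathbb R$, hence itself an injective metric space. Applying the defining injectivity property to the inclusion $e(\gamma)\hookrightarrow E(X)$ and the identity on $e(\gamma)$ yields a $1$-Lipschitz retraction $\rho\colon E(X)\to e(\gamma)$. Given any $(K,L)$-quasi-geodesic $\alpha$ in $E(X)$ with endpoints on $e(\gamma)$, the composition $\rho\circ\alpha$ is a $(K,L)$-quasi-geodesic in $e(\gamma)\cong\gamma$ between the same endpoints, which controls the ``shadow'' of $\alpha$ on $e(\gamma)$ but not the depth of its excursions.

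To bound the excursion depth, I would use the extremal-function description of $E(X)$: points $f\in E(X)$ are extremal functions $X\to\mathbb R$ with $d(f,e(x))=f(x)$ for all $x\in X$, so in particular $d(f,e(\gamma))=\min_{u\in\gamma}f(u)$ and $d(f,X)\le d(f,e(\gamma))$. Consequently, for each point $\alpha(t)$ in an excursion of depth $R$, there is a witness $x_t\in X$ with $d(\alpha(t),x_t)\le R$ (a near-minimizer on $X$ of the extremal function $\alpha(t)$). Applied to a finite family of times throughout the excursion, the witnesses form a discrete path in $X$ whose pairwise distances and closest-point projections to $\gamma$ are tied, via the retraction $\rho$, to those of $\alpha$ and of the quasi-geodesic $\rho\circ\alpha$. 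Strong contraction of $\gamma$ in $X$ applied to these witnesses then constrains $R$ in terms of $(K,L)$ and the contraction constant.

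The main obstacle is that $X$ need not be coarsely dense in $E(X)$, so at deep excursion points the witnesses $x_t\in X$ may be at distance $\asymp R$ from $\alpha(t)$, and crucially may lie near $\gamma$ itself, where strong contraction provides no useful information. I expect the resolution requires choosing witnesses at a controlled distance from $\gamma$ using the conical geodesic bicombing of $E(X)$ provided by Lang's construction, and then invoking strong contraction in $X$ on these more carefully selected witnesses to extract the quantitative Morse bound for $e(\gamma)$ in $E(X)$.
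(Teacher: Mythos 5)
Your opening reduction is sound: $E(X)$ is injective, so by Theorem~\ref{thm:main} it suffices to show $e(\gamma)$ is Morse (or quasiconvex plus the reverse triangle inequality for projections) in $E(X)$. But the proof has a genuine gap exactly at the step that carries all the content, and you acknowledge as much. Two concrete problems. First, the $1$-Lipschitz retraction $\rho\colon E(X)\to e(\gamma)$ exists (an interval is injective, hence an absolute $1$-Lipschitz retract), but it buys essentially nothing: the $x$-axis in $\mathbb{R}^2$ is a $1$-Lipschitz retract and is neither Morse nor strongly contracting. Moreover $\rho\circ\alpha$ is \emph{not} a $(K,L)$-quasi-geodesic as you claim -- a $1$-Lipschitz map preserves the upper bound but not the lower bound in the quasi-geodesic inequality. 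Second, and more seriously, your mechanism for importing strong contraction from $X$ fails: a point $f$ of $E(X)$ at distance $R$ from $e(\gamma)$ may also be at distance comparable to $R$ from $e(X)$ ($X$ need not be coarsely dense in $E(X)$), so your witnesses $x_t$ can sit on or near $\gamma$ itself, where the contraction hypothesis says nothing. The appeal to the bicombing to ``choose witnesses at a controlled distance from $\gamma$'' is a hope, not an argument; as written, the excursion-depth bound is simply not established.

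The paper closes this gap by a different route that never tries to locate points of $X$ near a given point of $E(X)$. It works with the metric-form characterization of the injective hull: for $f\in E(X)$ and any $x\in X$ there is $y\in X$ with $f(x)+f(y)\leq d(x,y)+\epsilon$, and $f(x)=d_\infty(f,e(x))$. These identities are combined \emph{algebraically} with two facts about $\gamma$ in $X$: strong contraction (giving $d(x,c)\approx d(x,x')+d(x',c)$ for $x'\in\pi_\gamma(x)$) and the observation (Lemma~\ref{lem:adding_points}) that $\gamma\cup\{x,y\}$ still satisfies Gromov's $4$-point condition. Chaining these inequalities yields first that geodesics of $E(X)$ between points of $e(\gamma)$ stay close to $e(\gamma)$ (Lemma~\ref{lem:contr_inj}), then the reverse triangle inequality for projections (Lemma~\ref{lem:weak_contr_inj}), then that geodesics from any $f\in E(X)$ to $e(\gamma)$ pass near the projection point (Lemma~\ref{lem:two_cases}); the Charney--Sultan style argument of Theorem~\ref{thm:main_body} then finishes. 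If you want to salvage your outline, the missing ingredient is precisely this use of the extremal-function inequality as a substitute for coarse density of $X$ in $E(X)$.
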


In other words, a Morse geodesic in $X$ stays Morse in $E(X)$ if and only if $\gamma$ is strongly contracting in $X.$ We note that the same statement also holds for strongly contracting subsets satisfying Gromov's 4-point condition, see Theorem \ref{thm:contracting_survives}. We note that this stronger version implies the well-known fact that hyperbolic spaces are coarsely dense in their injective hulls, see Corollary \ref{cor:Lang}, which was first proven in \cite{LANG2013}.

At this point a natural question arises, which is what happens to rays of $X$ that are Morse but not strongly contracting when passing to the injective hull $E(X)$. We do not have a good guess for this, so we formulate the question vaguely:

\begin{question}
Let $\gamma\subseteq X$ be a Morse geodesic and let $e:X\to E(X)$ be the injective hull of $X$. Does $e(\gamma)$ retain any (weak) form of ``hyperbolicity"?
\end{question}

\subsection*{Consequences for mapping class groups}

We now point out various consequences of our main theorem when combined with known results from the literature, in the context of mapping class groups. Analogous results also hold for all hierarchically hyperbolic groups. We emphasise that contraction properties weaker than strong contraction do not suffice for any of the applications below (quasi-axes of pseudo-Anosov elements in Cayley graphs are known to satisfy a weaker form of contraction \cite{Behrstock_pA_are_Morse}).

First of all, since mapping class groups act properly and coboundedly on injective metric spaces, we obtain the first known geometric models for mapping class groups where pseudo-Anosov elements have strongly contracting axes (where by geometric model we mean a geodesic space being acted on properly and coboundedly, hence equivariantly quasi-isometric to the given group).

\begin{introthm}{\rm (Theorem \ref{thm:main} plus \cite{haettelhodapetyt:coarse})}
\label{morseiscontracting} Each mapping class group $G$ admits a proper cobounded action on a metric space $X$ such that the following hold:

\begin{enumerate}
\item Morse and strongly contracting sets are equivalent in $X,$ and
\item pseudo-Anosov elements of $G$ all have strongly contracting quasi-axes in $X.$
\end{enumerate}
\end{introthm}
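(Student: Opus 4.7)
\medskip
\noindent\emph{Proof proposal.} The plan is to assemble three inputs: (i) the Haettel--Hoda--Petyt theorem that every mapping class group $G$ acts properly and coboundedly on an injective metric space $X$; (ii) Theorem~\ref{thm:main}, which says Morse and strongly contracting subsets of $X$ coincide quantitatively; and (iii) the classical fact (e.g.\ Behrstock) that pseudo-Anosov elements act as Morse elements on any Cayley graph of $G$. Item (i) immediately supplies the space $X$, and item (ii) is precisely statement~(1) of the theorem, so essentially no work is required for the first conclusion.

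For statement~(2), I would first fix a pseudo-Anosov $g \in G$, a basepoint $x_0 \in X$, and consider the orbit $\gamma := \{g^n \cdot x_0 : n \in \Z\}$. Since the action of $G$ on $X$ is proper and cobounded, the orbit map $G \to X$, $h \mapsto h\cdot x_0$, is a $G$-equivariant quasi-isometry from any Cayley graph of $G$ (with respect to a finite generating set) to $X$. In particular, $\gamma$ is a quasi-geodesic in $X$, and under the orbit map it corresponds (up to finite Hausdorff distance) to a quasi-axis of $g$ in the Cayley graph.

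Next I would invoke the Morse property of pseudo-Anosov quasi-axes in the Cayley graph, and push it forward to $X$ using quasi-isometry invariance of the Morse property for quasi-geodesics (this is standard and the gauges transform in a controlled way). Thus $\gamma$ is a Morse quasi-geodesic in $X$. Finally, applying Theorem~\ref{thm:main} to the Morse set $\gamma \subseteq X$ yields that $\gamma$ is strongly contracting in $X$, with contraction constant determined by the Morse gauge; this establishes~(2).

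The only subtlety worth flagging is making sure the quantitative statements cited align: one needs that ``Morse in the Cayley graph of $G$'' transfers to ``Morse quasi-geodesic in $X$'' with a gauge depending only on the quasi-isometry constants of the orbit map and on $g$, so that Theorem~\ref{thm:main} applies. This should be routine from quasi-isometry invariance of Morseness, so I do not expect a genuine obstacle; the substantive content of Theorem~C is really packaged inside Theorem~\ref{thm:main} and the Haettel--Hoda--Petyt construction.
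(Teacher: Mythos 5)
Your proposal is correct and follows exactly the route the paper intends: the theorem is stated as ``Theorem \ref{thm:main} plus \cite{haettelhodapetyt:coarse}'', i.e.\ take the Haettel--Hoda--Petyt injective space, apply Theorem \ref{thm:main} for item (1), and use Behrstock's result that pseudo-Anosov quasi-axes are Morse together with quasi-isometry invariance of Morseness to get item (2). No gaps.
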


This theorem further highlights the significance of \cite{haettelhodapetyt:coarse} and encourages further study of injective metric spaces in light of their recent influence on geometric group theory. On this note, we point out that although this may not be apparent from their construction, the ideas behind recent work of \cite{PSZCAT} constructing hyperplanes and hyperbolic models for CAT(0) spaces originate from the study of injective metric spaces and their $d^{\infty}$-like distance.

We remark that it was previously known that mapping class groups are quasi-isometric to spaces where Morse subsets are strongly contracting, namely, CAT(0) cube complexes \cite{Petyt21}. However, such a quasi-isometry is not equivariant as mapping class groups do not act on these spaces.

\subsubsection*{On genericity of pseudo-Anosovs}

A long standing conjecture due to Thurston \cite{FarbProblems} states that pseudo-Anosov elements are generic with respect to the counting measure in each Cayley graph of a mapping class group $G$.  Work of Yang \cite{Yang2018} shows genericity of strongly contracting elements under general conditions that apply to proper cobounded actions with a strongly contracting element. Combining our main theorem with \cite{haettelhodapetyt:coarse} and \cite{Yang2018} we get:




\begin{introthm}{\rm (Theorem \ref{thm:main} plus \cite{haettelhodapetyt:coarse} plus \cite{Yang2018})}
\label{cor:genericity} Let $G$ be a mapping class group and let $X$ be an injective space acted on properly and coboundedly by $G.$ The collection of pseudo-Anosov elements is generic with respect to the counting measures in balls of $X$. 
\end{introthm}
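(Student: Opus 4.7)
The plan is to combine three ingredients: the main theorem (Theorem A), the action of mapping class groups on injective spaces provided by Haettel-Hoda-Petyt, and Yang's genericity result for strongly contracting elements in proper cobounded actions. The strategy is to identify the pseudo-Anosov elements of $G$ with the strongly contracting elements for the action $G \curvearrowright X$, and then quote Yang.

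First, I would establish that every pseudo-Anosov element $g \in G$ has a strongly contracting quasi-axis in $X$. Pseudo-Anosov elements are Morse with respect to any word metric on $G$ (this is classical, due to Behrstock). Since $G \curvearrowright X$ is proper and cobounded, any orbit map is a $G$-equivariant quasi-isometry, so it sends a quasi-axis of $g$ in $G$ to a Morse quasi-axis of $g$ in $X$ (Morseness is a quasi-isometry invariant of subsets). Applying Theorem A, this Morse quasi-axis is strongly contracting, quantitatively. In particular, the action $G \curvearrowright X$ admits a strongly contracting element.

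Next, I would invoke Yang's theorem from \cite{Yang2018}, which says that for a proper cobounded action of a non-elementary group on a geodesic metric space admitting a strongly contracting element, the set of elements whose orbit maps produce strongly contracting quasi-axes is generic with respect to the counting measure on balls of $X$. To conclude, I need to identify this generic set with the set of pseudo-Anosov elements of $G$. One inclusion was just established. For the reverse, if $g \in G$ has a strongly contracting quasi-axis in $X$, then this quasi-axis is in particular Morse, hence (pulling back via the quasi-isometry $G \to X$) $g$ is a Morse element of $G$; but Morse elements of mapping class groups are known to be exactly the pseudo-Anosov elements.

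The main potential obstacle is verifying that Yang's hypotheses are met in the exact form needed: one must check that $G$ is non-elementary (immediate, as mapping class groups of surfaces with enough complexity contain free subgroups) and that the action is sufficiently nice for \cite{Yang2018} to apply — properness and coboundedness plus existence of a strongly contracting element are the standing assumptions there, so this is essentially bookkeeping. A secondary point to make explicit is the equivalence "Morse in $G$ $\Leftrightarrow$ Morse orbit in $X$" under a proper cobounded action, but this is a standard consequence of the Milnor-Schwarz lemma and the quasi-isometry invariance of Morseness.
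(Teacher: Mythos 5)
Your proposal is correct and follows exactly the route the paper intends: the theorem is stated as a direct combination of Theorem A, the Haettel--Hoda--Petyt action, and Yang's genericity result, and your argument (pseudo-Anosov $\Rightarrow$ Morse in $G$ $\Rightarrow$ Morse in $X$ $\Rightarrow$ strongly contracting, then Yang, then the reverse chain to see that the generic set of strongly contracting elements consists of pseudo-Anosovs) is precisely that combination. The paper gives no further detail, so there is nothing to compare beyond noting that your write-up correctly handles the one direction actually needed for genericity (strongly contracting $\subseteq$ pseudo-Anosov) separately from the direction needed to verify Yang's hypothesis.
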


Here, ``genericity with respect to the counting measure in balls" is defined in a natural way in terms of counting orbit points in balls of increasing radius, see \cite[Subsection 1.1]{Yang2018}.

Although the space $X$ from Theorem \ref{morseiscontracting} is the first geometric model of the mapping class group where the Morse and strongly contracting notions are equivalent, it is not the first space where pseudo-Anosov elements are generic with respect to the counting measure, as Choi \cite{Choi2021PseudoAnosovsAE} showed that there are finite generating sets of any mapping class group where pseudo-Anosov elements are generic with respect to the counting measure in that Cayley graph. 

\subsubsection*{Growth-tightness} Another consequence of the presence of strongly contracting geodesics relates to \emph{growth tightness} of the action. For word metrics, growth tightness is a condition comparing growth rates of a given group with the growth rates of its quotient, and it was originally introduced by de la Harpe and Grigorchuk as a scheme to show that a given group is Hopfian \cite{growth_tight}.  Work of Arzhantseva, Cashen, and Tao \cite{Arzhantseva2015} generalises this notion to group actions, and they show that cobounded actions that admit a strongly contracting element are growth tight. In \cite[Question 3]{Arzhantseva2015}, the authors ask whether the action of a mapping class group $G$ on its Cayley graph(s) is growth tight. Combining Theorem \ref{thm:main} with \cite{haettelhodapetyt:coarse} and \cite{Arzhantseva2015}, we obtain the following.

\begin{introthm}\label{cor:growthtight}{\rm (Theorem \ref{thm:main} plus \cite{haettelhodapetyt:coarse} plus \cite{Arzhantseva2015})}  Each mapping class group admits a growth-tight proper cobounded action on a metric space. 
\end{introthm}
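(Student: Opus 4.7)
The plan is to chain together the three cited ingredients exactly as the statement suggests. First, I would invoke \cite{haettelhodapetyt:coarse} to obtain, for a given mapping class group $G$, a proper cobounded action of $G$ on an injective metric space $X$. This handles the existence of the action; what remains is to verify that this action is growth-tight, and by \cite{Arzhantseva2015} it suffices to exhibit a single strongly contracting element in $G$ (with respect to the pseudo-metric induced by the orbit map to $X$).

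Next I would produce a Morse element in $G$ for the action on $X$. The natural candidate is any pseudo-Anosov $g \in G$. Since the action $G\curvearrowright X$ is proper and cobounded, the orbit map $G \to X$ is a $G$-equivariant quasi-isometry, so it transports the well-known fact that pseudo-Anosov elements act as loxodromic isometries on the curve graph (and more generally are Morse in any geometric model of $G$, e.g.\ in any word metric) into the statement that $g$ has a Morse quasi-axis in $X$. Concretely, a $G$-orbit of $g$-powers yields a quasi-geodesic in $X$ that is Morse, with gauge depending only on $g$ and on the quasi-isometry constants.

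Now I would apply Theorem~\ref{thm:main} to this Morse quasi-axis: since $X$ is injective and the quasi-axis is Morse, it is strongly contracting, quantitatively. Thus $g$ is a strongly contracting element for the action $G\curvearrowright X$. This is exactly the hypothesis needed to invoke \cite{Arzhantseva2015}: a cobounded isometric action of a countable group on a geodesic metric space admitting a strongly contracting element is growth-tight. Combining these three steps yields the theorem.

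The only slightly non-routine point is the verification that pseudo-Anosovs remain Morse in the particular model $X$ produced by \cite{haettelhodapetyt:coarse}, but this is immediate from quasi-isometry invariance of Morseness together with the fact that pseudo-Anosovs are Morse in a standard Cayley graph of $G$; no new argument is required. The real content of the corollary is the equivalence provided by Theorem~\ref{thm:main}, which upgrades the (a priori only Morse) quasi-axis to a strongly contracting one, thereby unlocking the Arzhantseva--Cashen--Tao machine.
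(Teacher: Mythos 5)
Your proposal is correct and is essentially the paper's intended argument: Haettel--Hoda--Petyt give the proper cobounded action on an injective space, pseudo-Anosovs are Morse there by quasi-isometry invariance, Theorem~\ref{thm:main} upgrades Morse to strongly contracting, and Arzhantseva--Cashen--Tao then yields growth tightness. The only small wrinkle is that loxodromicity on the curve graph is not the fact that transports along the orbit quasi-isometry (the curve graph is not a geometric model of $G$); the correct input, which you also cite, is Behrstock's theorem that pseudo-Anosov axes are Morse in the word metric.
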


Although Theorem \ref{cor:growthtight} does not exactly answer their question, it does provide the first geometric model for the mapping class group where the action is growth-tight (note that while the action of a mapping class group on the corresponding Teichm\"uller space with the Teichm\"uller metric is growth-tight \cite{Arzhantseva2015}, such an action is not cobounded).

\subsection*{Consequences for groups acting on injective spaces} We now discuss consequences of our main theorem for Helly groups and other groups acting on injective metric spaces. We start with another large-scale geometric property of injective metric spaces that we obtain, namely the \emph{Morse local-to-global} property.

\subsubsection*{Morse local-to-global}  
The Morse local-to-global property was introduced in \cite{Morse-local-to-global} with the goal of generalising results and arguments about hyperbolic groups to more general groups. Such spaces and groups have been proven to enjoy an abundance of desirable properties such as the existence of an automatic structure for Morse geodesics \cite{Cordes2022,Huges2022}, combination type theorems for stable subgroups \cite{Morse-local-to-global}, rationality of the growth of stable subgroups \cite{Cordes2022}, and some implications regarding the action of such groups on their Morse boundaries \cite{Cordes2022} (see \cite{charneysultan:contracting,cordes:morse} for the notion of Morse boundary and \cite{durhamtaylor:convex, tran:onstrongly} for the definition of a stable subgroup). Our second main theorem shows that any metric space where the notions of Morse and strongly contracting are equivalent must have the Morse local-to-global property.


\begin{introthm}\label{thm:injectiveareMLTG} If $X$ is a metric space where Morse geodesics are strongly contracting quantitatively, then $X$ has the Morse local-global-property. In particular, injective metric spaces have the Morse local-global-property. 
\end{introthm}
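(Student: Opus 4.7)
The strategy is to use the assumed equivalence to trade a Morse local-to-global question, which is delicate, for an analogous local-to-global statement about strong contraction, which is substantially better behaved, and then convert back.

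Let $\gamma\colon I\to X$ be a $(K,C,L)$-local $N$-Morse quasi-geodesic, with $L$ to be chosen. Each subsegment of parameter length at most $L$ is an $N$-Morse $(K,C)$-quasi-geodesic, so by hypothesis it is $D$-strongly contracting with $D=D(N,K,C)$. I would then show, via a projection argument, that provided $L$ is large enough in terms of $K$, $C$, and $D$, the path $\gamma$ is globally a $(K',C')$-quasi-geodesic that is moreover globally $D'$-strongly contracting. For the quasi-geodesic part: if $p,q\in\gamma$ were far apart in parameter yet close in $X$, then projecting $p$ onto an $L$-subsegment of $\gamma$ near $q$ would have to land within $D$ of $q$, contradicting the local $(K,C)$ spacing for suitable $L$. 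For global strong contraction: given a ball disjoint from $\gamma$ whose projection had large diameter, the global quasi-geodesic property just established allows one to enclose the far-apart projections in a common $L$-subsegment, contradicting $D$-strong contraction there.

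Once $\gamma$ is shown to be globally strongly contracting, the easy implication that strongly contracting subsets are Morse (quantitatively) produces the desired global Morse gauge, completing the proof of the first statement. The ``in particular'' clause is immediate from Theorem~\ref{thm:main}.

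The main obstacle is a simultaneous bootstrap: projections onto a subsegment of $\gamma$ do not a priori agree with projections onto $\gamma$, so the global quasi-geodesic property, the scale $L$, and the contraction gauge must be controlled in tandem rather than in sequence. Calibrating $L$ in terms of $D(N,K,C)$ so that local contraction rules out any global wrap-around of $\gamma$, while the absence of such wrap-around in turn forces local and global projections to agree up to bounded error, is the technical core of the argument.
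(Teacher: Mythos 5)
Your high-level strategy is exactly the paper's: convert the local Morse hypothesis into local strong contraction via the assumed equivalence, prove a local-to-global principle for strongly contracting quasi-geodesics, and convert back using the (quantitative) implication that strongly contracting sets are Morse. However, the technical core that you correctly identify as the main obstacle is not actually resolved in your plan, and the two mechanisms you do propose have gaps. For the global strong contraction step, you suggest enclosing two far-apart projection points of a ball $B$ in a common $L$-subsegment; but the problematic case is precisely when those projection points are at distance much larger than $L$ along $\gamma$, and you cannot reduce to the bounded case by an intermediate-value argument, since the coarse projection is not continuous and controlling how projections of nearby points vary is exactly the contraction statement you are trying to prove. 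Similarly, your argument for the global quasi-geodesic property ("projecting $p$ onto an $L$-subsegment near $q$ lands within $D$ of $q$, contradicting the local spacing") does not produce a contradiction as stated, because when $p$ and $q$ are more than $L$ apart in parameter the local quasi-geodesic hypothesis says nothing about the pair $(p,q)$.

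The missing ingredient is a \emph{bounded jumps} lemma (Lemma \ref{lem:bounded_jumps} in the paper): if a strongly contracting quasi-geodesic is the concatenation $\gamma_1*\gamma_2$ of two strongly contracting quasi-geodesics at a point $p$, and $x$ projects onto $\gamma_1$ far from $p$, then $x$ projects onto $\gamma_2$ close to $p$. With this in hand one subdivides the portion of $\gamma$ between the two projection points into consecutive subsegments of parameter length at most $L/2$ and propagates the projection of $y$ backwards through the chain, one subsegment at a time, until both projections land near opposite ends of a single short subsegment, contradicting its $D$-contraction; the same chaining shows the endpoints of $\gamma$ project near the ends of every $L$-subsegment, which forces a geodesic between the endpoints to fellow-travel each subsegment and yields the global quasi-geodesic constants. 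Without this propagation mechanism (or a substitute for it), your plan does not close; everything else in your outline, including the reduction to Proposition \ref{prop:contraction_local_to_global} and the final appeal to Theorem \ref{thm:main}, matches the paper.
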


In \cite{Morse-local-to-global}, the authors prove that CAT(0) spaces and hierarchically hyperbolic spaces with the \emph{bounded domain dichotomy} (a minor condition that it met by all the main examples) have the Morse local-to-global property. Theorem \ref{thm:injectiveareMLTG} recovers the Morse local-to-global property for CAT(0) spaces and when combined with \cite{haettelhodapetyt:coarse}, it establishes the Morse local-to-global property for all hierarchically hyperbolic spaces.

\begin{introcor} \rm{(Theorem \ref{thm:injectiveareMLTG} plus \cite{haettelhodapetyt:coarse})}  Every hierarchically hyperbolic space has the Morse local-to-global property.
\end{introcor}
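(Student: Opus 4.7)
The plan is to deduce the corollary essentially formally from Theorem \ref{thm:injectiveareMLTG} and \cite{haettelhodapetyt:coarse} via a quasi-isometry invariance argument. The overall strategy is to realise any hierarchically hyperbolic space as quasi-isometric to an injective metric space, invoke Theorem \ref{thm:injectiveareMLTG} on that injective model, and then transfer the Morse local-to-global property back along the quasi-isometry.

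First I would invoke the main construction of \cite{haettelhodapetyt:coarse} to associate to a hierarchically hyperbolic space $X$ an injective metric space $Y$ that is quasi-isometric to $X$. In the group case the quasi-isometry is obtained from a proper cobounded action on $Y$, but the injective model itself is constructed at the level of spaces using only the hierarchy data, so the construction applies to any HHS and not only to HHGs. Applying Theorem \ref{thm:injectiveareMLTG} to $Y$ then yields that $Y$ has the Morse local-to-global property.

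The remaining step is to transport this property from $Y$ back to $X$. The Morse local-to-global property is a quasi-isometry invariant of metric spaces: a quasi-isometry sends $L$-local $(K,C)$-quasi-geodesics to $L'$-local $(K',C')$-quasi-geodesics with controlled change of constants, and it transforms an $N$-Morse quasi-geodesic into an $N'$-Morse quasi-geodesic with a controlled change of Morse gauge. Since the Morse local-to-global property is phrased entirely in terms of these data, chaining these controlled transformations with the $Y$-side statement produces the corresponding statement for $X$. This QI-invariance is essentially established in \cite{Morse-local-to-global}.

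The main point that requires care is confirming that the HHP construction is indeed available for an arbitrary hierarchically hyperbolic space (and not merely for HHGs, for which the quasi-isometry is equivariant); this is however built into \cite{haettelhodapetyt:coarse}. With that in hand, the QI-invariance of the Morse local-to-global property is a routine check with the definitions, and the proof is a short chain of citations.
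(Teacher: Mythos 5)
Your argument is correct and is exactly the route the paper intends: \cite{haettelhodapetyt:coarse} provides, for any hierarchically hyperbolic space (not just a group), a quasi-isometric coarsely injective model whose injective hull is then an injective space to which Theorem \ref{thm:injectiveareMLTG} applies, and the Morse local-to-global property is transported back by its quasi-isometry invariance from \cite{Morse-local-to-global}. Nothing further is needed.
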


\subsubsection*{Acylindrical hyperbolicity}

Strong contraction often is the key to using the machinery of projection complexes from \cite{Bestvina2014} (see also \cite{bestvinabrombergfujiwarasisto:acylindrical}). Indeed, in Subsection \ref{subsec:a.h} we will use the Morse local-to-global property to obtain a strongly contracting element, and then applying the aforementioned machinery we prove:

\begin{introthm}
\label{thm:ah_intro}
Let $G$ be a group acting properly and coboundedly on an injective space $X$, and suppose that $G$ is not virtually cyclic. Then $G$ is acylindrically hyperbolic if and only if it contains a Morse ray.
\end{introthm}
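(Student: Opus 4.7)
The plan is to prove the two directions separately. The forward direction follows from standard properties of acylindrically hyperbolic groups, while the backward direction combines the paper's main results (Theorems \ref{thm:main} and \ref{thm:injectiveareMLTG}) with the Bestvina--Bromberg--Fujiwara projection complex machinery.

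For the forward direction, I would use Osin's characterization of acylindrically hyperbolic groups to extract a generalized loxodromic element $g \in G$, and then invoke Sisto's theorem that generalized loxodromic elements are Morse with respect to any word metric on $G$. Since the action on $X$ is proper and cobounded, the Milnor--\v{S}varc lemma provides an equivariant quasi-isometry from $G$ to $X$, under which a $g$-orbit becomes a Morse quasi-geodesic in $X$, yielding the required Morse ray after straightening.

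For the backward direction, let $\gamma \subset X$ be a Morse ray. By Theorem \ref{thm:main}, $\gamma$ is strongly contracting. The key step is to upgrade this to a Morse element of $G$. I would choose a sequence $\{g_n\}_n \subset G$ with $g_n \cdot x_0$ fellow-traveling $\gamma$ at some large spacing $T$ (using coboundedness); by properness, the ``increments'' $g_n^{-1} g_{n+1}$ take only finitely many values, and an iterated pigeonhole argument produces an element $g \in G$ whose $\langle g \rangle$-orbit quasi-periodically tracks $\gamma$. On bounded windows this orbit inherits Morseness from $\gamma$ with a uniform gauge, and the Morse local-to-global property (Theorem \ref{thm:injectiveareMLTG}) then promotes this to genuine Morseness of the full orbit. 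Thus $g$ is a Morse element, and Theorem \ref{thm:main} gives that its quasi-axis is strongly contracting. Finally, I would apply the projection complex machinery \cite{Bestvina2014, bestvinabrombergfujiwarasisto:acylindrical} to the $G$-translates of this quasi-axis, producing an acylindrical action of $G$ on a hyperbolic projection complex on which $g$ is loxodromic; the non-virtual-cyclicity hypothesis ensures the action is non-elementary, and hence that $G$ is acylindrically hyperbolic.

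The main obstacle is producing the Morse element in the backward direction: the pigeonhole step only controls the orbit on bounded windows, and it is the Morse local-to-global property that is needed to ensure the full orbit is Morse as a quasi-geodesic rather than merely ``locally Morse''. Without Theorem \ref{thm:injectiveareMLTG}, this step would fail, which is precisely why it features as a central ingredient in the backward direction.
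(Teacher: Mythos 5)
Your proposal is correct and follows essentially the same route as the paper: the backward direction in the paper likewise combines the Morse local-to-global property with \cite[Proposition 4.8]{Morse-local-to-global} to produce a Morse element, upgrades it to a strongly contracting element via Theorem \ref{thm:main}, and then invokes \cite[Theorem H]{Bestvina2014} together with non-virtual-cyclicity to conclude acylindrical hyperbolicity. The only difference is that you sketch the pigeonhole/local-to-global argument producing the Morse element, whereas the paper simply cites it as \cite[Proposition 4.8]{Morse-local-to-global}; the forward direction is the standard fact that generalized loxodromics are Morse, exactly as you say.
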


One reason of interest in this kind of statements lies in the fact that, while it is not known whether being acylindrically hyperbolic is a quasi-isometry invariant, containing a Morse ray is. Therefore, it is useful to know under what circumstances containing a Morse ray is equivalent to being acylindrically hyperbolic. In particular, we obtain the following corollary.

\begin{introcor}
If $G,H$ are quasi-isometric Helly groups, then $G$ is acylindrically hyperbolic if and only if $H$ is.
\end{introcor}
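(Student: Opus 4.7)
The plan is to combine Theorem \ref{thm:ah_intro} with the quasi-isometry invariance of containing a Morse ray, using only that Helly groups act properly and coboundedly on injective metric spaces.

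First, I would dispose of the virtually cyclic case. Being virtually cyclic is a quasi-isometry invariant of finitely generated groups, and a virtually cyclic group is by definition not acylindrically hyperbolic. So if one of $G, H$ is virtually cyclic, then both are, and both fail to be acylindrically hyperbolic, making the equivalence vacuous. From here on, assume neither $G$ nor $H$ is virtually cyclic.

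Next, I would verify the hypotheses of Theorem \ref{thm:ah_intro} for both groups. By definition, a Helly group acts properly and cocompactly on a Helly graph $\Gamma$. By \cite[Proposition 3.12]{CCGHO20} (cited in the paper), $\Gamma$ is coarsely dense in its injective hull $E(\Gamma)$, and the action extends to a proper cobounded action of the group on $E(\Gamma)$, which is an injective metric space. Applying Theorem \ref{thm:ah_intro} to $G \curvearrowright E(\Gamma_G)$ and $H \curvearrowright E(\Gamma_H)$ gives:
\begin{equation*}
G \text{ is acylindrically hyperbolic} \iff G \text{ contains a Morse ray},
\end{equation*}
and the analogous statement for $H$.

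Finally, I would invoke the quasi-isometry invariance of ``containing a Morse ray.'' A quasi-isometry $f : G \to H$ sends any Morse quasi-geodesic ray to a Morse quasi-geodesic ray (with a new Morse gauge depending only on the original one and the quasi-isometry constants), and any Morse quasi-geodesic lies in bounded Hausdorff distance from a Morse geodesic in a geodesic metric space such as a Cayley graph. Hence $G$ contains a Morse ray if and only if $H$ does. Chaining the three equivalences yields the corollary.

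The argument is essentially a routine assembly; the only point requiring a moment of care is ensuring the existence of a Morse ray passes through the quasi-isometry cleanly, but this is standard from the definition of Morseness as a quasi-geodesic stability condition. No real obstacle is expected once Theorem \ref{thm:ah_intro} is in hand.
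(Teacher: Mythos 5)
Your proposal is correct and follows essentially the same route the paper intends: apply Theorem~\ref{thm:ah_intro} to the proper cobounded action of each Helly group on the injective hull of its Helly graph (using coarse density from \cite[Proposition 3.12]{CCGHO20}), handle the virtually cyclic case separately, and transfer the existence of a Morse ray through the quasi-isometry. The paper presents the corollary as an immediate consequence of exactly these ingredients, so there is nothing to add.
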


This corollary is worth comparing to the analogous statement for groups acting on CAT(0) cube complexes from \cite{genevois:hyperbolicities}.

\subsubsection*{Growth rates} 

Our final two consequences are about \emph{growth rates} of groups acting on injective metric spaces and their subgroups. Given an action of a group $G$ on a metric space and a subset $A$ of $G$, the growth rate in $X$ of $A$ (if well-defined) measures, roughly, the exponential growth rate of the orbits of $A$ in $X$. Recent work of Legaspi \cite{Xabi} provides various interesting results regarding growth rates of groups admitting a proper action on a metric space $X$ with a strongly contracting element. Combining his work with Theorem \ref{thm:main}, we get the following.

\begin{introthm}{\rm (Theorem \ref{thm:main} plus \cite{Xabi})}\label{thm:growthrate}
Let $G$ be a group admitting a proper cobounded action on an injective metric space $X$. Assume further that $G$ is not virtually cyclic and that it contains a Morse ray. If $H<G$ is an infinite-index quasi-convex subgroup of $G$, then the growth rate in $X$ of $H$ is strictly smaller than the growth rate in $X$ of $G$.


\end{introthm}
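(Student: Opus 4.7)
The plan is to reduce this to Legaspi's theorem from \cite{Xabi}, which establishes exactly the desired strict growth inequality under the hypotheses that (i) $G$ acts properly on the space in question and (ii) $G$ contains an element whose orbit in that space is strongly contracting. Hypothesis (i) is given by assumption, so the real work is to verify (ii) for the action of $G$ on $X$.

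To verify (ii), I would first invoke Theorem \ref{thm:injectiveareMLTG} to conclude that the injective space $X$ has the Morse local-to-global property, which transfers to $G$ via the proper cobounded action. Since $G$ is not virtually cyclic and contains a Morse ray, the standard arguments available in Morse local-to-global groups (as developed in \cite{Morse-local-to-global}, and also used in the proof of Theorem \ref{thm:ah_intro}) promote the existence of a Morse ray to the existence of an infinite-order element $g\in G$ whose quasi-axis in $X$ is Morse. By Theorem \ref{thm:main}, this quasi-axis is then strongly contracting, so $g$ is a strongly contracting element for the action of $G$ on $X$.

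With a strongly contracting element produced, Legaspi's growth rate comparison theorem \cite{Xabi} applies directly to the infinite-index quasi-convex subgroup $H<G$ and yields the strict inequality between the growth rates of $H$ and $G$ in $X$. The only bookkeeping needed beyond this is to confirm that Legaspi's notion of quasi-convexity aligns with the one used in our statement, which it does, since both simply require that an (equivalently, every) $H$-orbit in $X$ be a quasi-convex subset.

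The main conceptual obstacle is the production of the strongly contracting element in the action on $X$, and this has essentially already been handled by the machinery built up earlier in the paper (Theorems \ref{thm:main} and \ref{thm:injectiveareMLTG}, plus the Morse local-to-global promotion argument underlying Theorem \ref{thm:ah_intro}). Once that element is in hand, the theorem is a direct citation of \cite{Xabi}, so the proof is essentially a combination of three ingredients rather than requiring any new technical input.
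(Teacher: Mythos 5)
Your proposal is correct and matches the paper's intended argument: the paper likewise obtains a strongly contracting element by combining the Morse local-to-global property of $X$ (Theorem \ref{thm:injectiveareMLTG}) with the promotion of a Morse ray to a Morse element as in the proof of Theorem \ref{thm:ah_intro}, converts it to strong contraction via Theorem \ref{thm:main}, and then cites Legaspi's growth-rate theorem. No genuinely different route is taken.
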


In the context of groups acting properly and coboundedly on injective spaces, the above theorem provides an analogue of \cite[Theorem A]{Cordes2022} which states that the growth rate of any stable subgroup of a virtually torsion-free Morse local-to-global group is strictly smaller than that of the ambient group. An immediate consequence of Theorem \ref{thm:growthrate} is the following.

\begin{introcor} Let $X$ be an injective metric space acted on properly and coboundedly by a mapping class group $G$ and let $H<G$ be a convex cocompact subgroup. Then the growth rate in $X$ of $H$ is strictly smaller than the growth rate in $X$ of $G$. 
\end{introcor}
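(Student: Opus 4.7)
The result is a direct application of Theorem \ref{thm:growthrate}. My plan is to verify its three hypotheses for $G = \mathrm{MCG}(S)$ acting on $X$ and $H < G$ convex cocompact in the sense of Farb--Mosher: namely, that $G$ is not virtually cyclic, that $G$ contains a Morse ray, and that $H$ is an infinite-index quasi-convex subgroup of $G$.

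Assuming $S$ has sufficient complexity (otherwise the statement is vacuous), $G$ is not virtually cyclic, and $G$ contains pseudo-Anosov elements whose quasi-axes in $X$ are strongly contracting, and hence Morse, by Theorem \ref{morseiscontracting}(2). This handles the first two hypotheses. For the infinite-index condition, convex cocompact subgroups of mapping class groups are virtually free, while $G$ itself is not in the non-degenerate cases, so $H$ automatically has infinite index.

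The substantive step is the quasi-convexity of $H$ in $X$. I would use the characterization of Farb--Mosher convex cocompactness as stability (due to Durham--Taylor and Kent--Leininger): $H$ is quasi-isometrically embedded in $G$, and any orbit of $H$ in a Cayley graph of $G$ is Morse. Since the action of $G$ on $X$ is proper and cobounded, the orbit map $G \to X$ is a $G$-equivariant quasi-isometry, and Morseness of subsets is a quasi-isometry invariant. Hence any $H$-orbit in $X$ is Morse. By Theorem \ref{thm:main}, it is then strongly contracting in $X$, and in particular quasi-convex, providing the remaining hypothesis of Theorem \ref{thm:growthrate}.

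The main obstacle is precisely this translation between the MCG-specific definition of convex cocompactness and the geometric notion of quasi-convexity in the injective space $X$. Both the quasi-isometry invariance of Morseness and, crucially, the main theorem of this paper (promoting Morseness to strong contraction inside $X$) are essential ingredients for carrying out that translation.
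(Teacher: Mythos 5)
Your proposal is correct and follows exactly the route the paper intends: the paper offers no written proof (it calls the corollary an ``immediate consequence'' of Theorem \ref{thm:growthrate}), and the content you supply --- convex cocompactness equals stability, stability of the orbit in a Cayley graph transfers to Morseness of the orbit in $X$ via the equivariant quasi-isometry, and Theorem \ref{thm:main} then upgrades this to strong contraction, hence quasi-convexity --- is precisely the intended verification of the hypotheses. One small inaccuracy: your justification of the infinite-index condition invokes the claim that convex cocompact subgroups of mapping class groups are virtually free, which is in fact a well-known open problem rather than a theorem. The conclusion you need follows anyway from what you have already established: stable subgroups are hyperbolic, while $G$ is not hyperbolic in the non-degenerate cases (it contains $\mathbb{Z}^2$), so $H$ cannot have finite index.
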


The above corollary is a companion result to \cite[Corollary C]{Cordes2022} which shows the same statement for the orbit of a convex cocompact subgroup in any Cayley graph of its ambient mapping class group. It also relates to Gekhtman's work who showed that the same result holds for the action of a mapping class group on its Teichmüller space  \cite{Gekhtmen_convex_cocompact}.
\subsection*{Outline}

After some general preliminaries in Section \ref{sec:prelim}, we prove Theorem \ref{thm:main}(=Theorem \ref{thm:main_body}) in Section \ref{sec:proof_main}. In Section \ref{sec:MLTG} we prove the Morse local-to-global property, Theorem \ref{thm:injectiveareMLTG}(=Proposition \ref{thm:MLTG_body}), which we then use in the proof of Theorem \ref{thm:ah_intro}(=Corollary \ref{thm:ah_body}). Finally, in Section \ref{sec:contr_survives} we prove a more general version of Theorem \ref{thm:contracting_survives_intro}, namely Theorem \ref{thm:contracting_survives}.

\subsection*{Acknowledgements} The authors are thankful to Carolyn Aboott, Jason Behrstock, Ilya Gekhtman, Anthony Genevois, Thomas Haettel, Xabi Legaspi, Harry Petyt, Kasra Rafi, Mireille Soergel, and Bert Wiest for helpful comments, questions, and discussions. The authors would also like to thank an anonymous referee for useful comments and for being exceptionally quick.

\section{Preliminaries}
\label{sec:prelim}


We will use the notation $B(x,r)$ to denote the (closed) ball of radius $r$ centered at $x.$
\begin{definition} (Injective spaces)
A metric space $X$ is said to be \emph{injective} if for any $\{x_i\}$ in $X$ and $\{r_i\} \in \mathbb{R}^+$, we have 

$$d(x_i,x_j) \leq r_i+r_j \text{ for all } i\neq j \implies \underset{i}{\cap}B(x_i,r_i) \neq \emptyset.$$
\end{definition}

\begin{definition}(Geodesic bicombing) A \emph{geodesic bicombing} on a metric space $(X,d)$ is a map $\gamma:X \times X \times[0,1] \rightarrow X$ such that for any two distinct points $x,y \in X$ the map $[0, d(x,y)] \rightarrow X$ given by $$t \mapsto \gamma_{x,y}\left(\frac{t}{d(x,y)}\right)$$ is a (unit-speed) geodesic, where $\gamma_{x,y}$ abbreviates the map $\gamma(x,y,-)$. A geodesic bicombing $\gamma$ is said to be \emph{conical} for any $x,y, x',y' \in X$ and $t \in [0,1]$, we have

$$d(\gamma_{x,y}(t),\gamma_{x',y'}(t)) \leq (1-t)d(\gamma_{x,y}(0), \gamma_{x',y'}(0))+td(\gamma_{x,y}(1), \gamma_{x',y}(1)).$$ It is said to be \emph{reversible} if for any $x,y \in X$, and $t \in [0,1]$, we have $\gamma_{x,y}(t)=\gamma_{y,x}(1-t).$

\end{definition}

\begin{proposition}\cite[Proposition 3.8]{LANG2013}\label{prop:bicombing} Each injective metric space $X$ admits a reversible conical geodesic bicombing which is $Isom(X)$-invariant. In particular, every injective metric space is geodesic.
\end{proposition}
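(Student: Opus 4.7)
The plan is to reconstruct Lang's original argument, which proceeds in two stages: first establishing geodesicity directly from the injectivity axiom, and then upgrading this to a bicombing with the conical, reversible, and $\isom(X)$-invariant properties via a compactness/averaging argument on the space of bicombings.

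For the first stage, given $x,y \in X$ at distance $d$ and any $t \in [0,1]$, the two balls $B(x,td)$ and $B(y,(1-t)d)$ have radii summing to exactly $d$, so injectivity produces a point $z$ in their intersection. This $z$ automatically satisfies $d(x,z)=td$ and $d(z,y)=(1-t)d$ by the triangle inequality, so it sits at parameter $t$ along a geodesic from $x$ to $y$. Iterating at dyadic values of $t$, choosing nested balls coherently, and passing to the closure yields, for each pair $(x,y)$, an honest unit-speed geodesic.

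The main step is organising these choices coherently so as to satisfy the conical inequality. The idea is to consider the set $\calB$ of all geodesic bicombings on $X$, topologised by pointwise convergence. This set is nonempty by the first stage, convex under pointwise averaging (using canonical midpoints in $X$, which themselves come from injectivity), and compact by an Arzel\`a--Ascoli argument since each $\gamma_{x,y}$ is $1$-Lipschitz with prescribed endpoints. One then defines a self-map $T\colon\calB\to\calB$ that subdivides each candidate geodesic at its midpoint and recomposes it using the current bicombing on the two halves, arranged so that fixed points of $T$ are forced to satisfy the conical inequality at all dyadic parameters, and hence by continuity at every $t$. Existence of a fixed point then follows from Markov--Kakutani once $T$ is verified to be continuous and affine. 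This fixed-point/conicality step is the main technical obstacle, because $T$ must be engineered so that invariance under it truly implies the four-point convexity inequality globally, rather than merely a local or one-scale version.

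Reversibility is obtained by restricting the construction to the closed convex subset $\calB^{\mathrm{rev}}\subseteq\calB$ of reversible bicombings, which is nonempty because one can symmetrise any $\gamma\in\calB$ by setting $\tilde\gamma_{x,y}(t)=m\bigl(\gamma_{x,y}(t),\gamma_{y,x}(1-t)\bigr)$ for a canonical midpoint $m$; applying Markov--Kakutani inside $\calB^{\mathrm{rev}}$ yields a conical reversible bicombing. The $\isom(X)$-invariance then comes for free, because every step of the construction (the balls, the injective intersections, the midpoint operation, the operator $T$) is canonical, so any isometry of $X$ permutes fixed points of $T$; one either selects a canonical fixed point or averages over the isometry group to obtain full equivariance. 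That $X$ is geodesic then falls out as an immediate consequence of the first stage.
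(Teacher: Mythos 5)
This proposition is not proved in the paper at all --- it is quoted verbatim from \cite{LANG2013} --- so the comparison has to be with Lang's original argument, which your proposal does not reconstruct. Your first stage is fine: the two balls $B(x,td(x,y))$ and $B(y,(1-t)d(x,y))$ have radii summing to $d(x,y)$, so injectivity gives a point realising parameter $t$, and this (together with completeness of injective spaces) yields geodesicity. The second stage, however, rests on tools that are not available. Injectivity does \emph{not} provide canonical midpoints: $B(x,d/2)\cap B(y,d/2)$ is typically a large set (already in $\ell_\infty^2$ it is a square), so the ``pointwise averaging'' that is supposed to make the set $\calB$ of bicombings convex is undefined, and Markov--Kakutani --- which needs a compact convex subset of a topological vector space --- has nothing to act on. Compactness of $\calB$ via Arzel\`a--Ascoli also fails for non-proper injective spaces such as $\ell_\infty(\mathbb{N})$. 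You acknowledge yourself that engineering the operator $T$ so that its fixed points satisfy the conical inequality is ``the main technical obstacle,'' i.e.\ the heart of the matter is left open. Finally, $\isom(X)$-invariance does not come for free: the construction involves repeated non-canonical choices (points in intersections of balls, midpoints, fixed points), and averaging over a general isometry group is not meaningful.

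Lang's actual proof is short, explicit and entirely canonical, and it uses exactly the objects recalled in Section \ref{sec:contr_survives} of this paper. An injective space equals its injective hull, $e(X)=E(X)\subseteq\Delta^1\subseteq\mathbb{R}^X$; the set $\Delta^1$ is convex in $\mathbb{R}^X$, and the linear bicombing $(f,g,t)\mapsto(1-t)f+tg$ on it is a reversible conical geodesic bicombing for $d_\infty$; and there is a canonical $1$-Lipschitz retraction $p\colon\Delta^1\to E(X)$ given by an explicit formula. Setting $\sigma_{x,y}(t)=e^{-1}\bigl(p((1-t)e(x)+te(y))\bigr)$ then does everything at once: it is geodesic because a $1$-Lipschitz image of a geodesic with the same endpoints is a geodesic, conical because $p$ is $1$-Lipschitz and the linear bicombing is conical, reversible by symmetry of the convex combination, and $\isom(X)$-equivariant because $e$, the linear structure on $\mathbb{R}^X$, and $p$ are all natural. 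No fixed-point theorem, compactness, or averaging is required; the key idea your proposal is missing is the passage to the convex model $\Delta^1$ and the canonical retraction onto $E(X)$.
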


\begin{definition}(Projections)\label{def: projections} Given a subset $Y$ of a metric space $X$ and $x\in X$, we define
$$\pi_{Y}(x)=\{y\in Y: d(x,y)\leq d(x,Y)+1\}\subseteq Y.$$ The set $\pi_Y(x)$ is called the \emph{coarse closest point projection} of $x$ to $Y.$ For a subset $B \subset X,$ we define $\pi_Y(B):=\underset{x \in B}{\cup} \pi_Y(x)$.

\end{definition}

\begin{definition}(Strongly contracting)
A subset $Y$ of a metric space $X$ is said to be \emph{$D$-strongly contracting} if for each ball $B$ disjoint from $Y$, we have $\text{diam}(\pi_Y(B)) \leq D$. It is said to be \emph{strongly contracting} if it is $D$-strongly contracting for some $ D.$

\end{definition}

We start with the following observation:

\begin{lemma}\label{rmk:proj_geod}
    Given a subset $Y$ of a metric space $X$, $x\in X$, and a point $p$ along a geodesic from $x$ to some $y\in \pi_Y(x)$, we have $d(p,y)\leq d(p,Y)+1$.
\end{lemma}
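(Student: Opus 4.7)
The plan is to reduce the inequality to a straightforward application of the triangle inequality combined with the definition of $\pi_Y$. Concretely, I would fix an arbitrary $y' \in Y$ and aim to show $d(p,y) \leq d(p,y') + 1$; taking the infimum over $y' \in Y$ then yields the claim.

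First I would unpack the two hypotheses. Since $y \in \pi_Y(x)$, by Definition \ref{def: projections} we have $d(x,y) \leq d(x,Y) + 1 \leq d(x,y') + 1$ for every $y' \in Y$. Since $p$ lies on a geodesic from $x$ to $y$, we have the equality $d(x,p) + d(p,y) = d(x,y)$, so $d(p,y) = d(x,y) - d(x,p)$.

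Next I would chain these with the triangle inequality $d(x,y') \leq d(x,p) + d(p,y')$ to get
\[
d(p,y) = d(x,y) - d(x,p) \leq d(x,y') + 1 - d(x,p) \leq d(p,y') + 1.
\]
Since $y' \in Y$ was arbitrary, infimising gives $d(p,y) \leq d(p,Y) + 1$, as desired.

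There is no real obstacle here: the lemma is a short observation that essentially says a point on a geodesic to a coarse closest-point projection is itself an almost-closest-point projection from its own position. No properties of $X$ beyond it being a metric space are used, so injectivity and bicombings play no role in the argument.
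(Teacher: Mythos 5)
Your proof is correct and is essentially the same argument as the paper's: both combine $d(x,p)+d(p,y)=d(x,y)$, the defining inequality $d(x,y)\leq d(x,Y)+1$, and the triangle inequality through $p$ to an arbitrary point of $Y$, then infimise. The paper merely writes the same chain of inequalities starting from $d(x,Y)$ rather than from $d(p,y)$.
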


\begin{proof}
This follows from the following chain of inequalities which holds for any $z\in Y$, and taking the an infimum over all $z$:
$$d(x,Y)\leq (d(x,y)-d(p,y))+d(p,z)\leq d(x,Y)+1 -d(p,y) + d(p,z).$$
\end{proof}

\begin{definition}(Morse) Given a map $M: \mathbb{R}^+ \times \mathbb{R}^+ \rightarrow \mathbb{R}^+$, a subset $Y$ of a metric space is said to be \emph{$M$-Morse} if every $(\lambda,\epsilon)$-quasi-geodesic $\beta$ with endpoints on $Y$ remains in the $M(\lambda, \epsilon)$-neighborhood of $Y$. A subset $Y$ is said to be Morse if there exists some map $M$ such that $Y$ is $M$-Morse. In this case, we call $M$ a \emph{Morse gauge} for $Y$.
\end{definition}

We recall \cite[Lemma 2.5]{QRT19}  which roughly says that given a quasi-geodesic $\beta$ and a point $p$, getting from $p$ to the closest point in $\beta$ and then moving along $\beta$ describes a quasi-geodesic. Although \cite[Lemma 2.5]{QRT19} is stated for CAT(0) spaces, such an assumption is not used in their proof (alternatively, see the proofs of \cite[Lemma 2.2]{cordes:morse} and \cite[Proposition 4.2]{ACGH} where the authors prove similar statements without assuming the underlying metric space is CAT(0)).

\begin{lemma}\cite[Lemma 2.5]{QRT19}
\label{lem:QRT}
Let $\beta$ be a $(q,Q)$-quasi-geodesic and let $x \in X.$ Let $y \in \beta$ satisfy $d(x,y)=d(x,\beta)$ and let $z \in \beta$. Then the concatenation $[x,y] \cup \beta_{yz}$ is a $(3q,Q)$-quasi-geodesic, where $[x,y]$ is any geodesic connecting $x,y$ and $\beta_{yz}$ is the subsegment of $\beta$ connecting $y,z.$
\end{lemma}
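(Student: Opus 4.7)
The plan is to parametrise the concatenation $\alpha := [x,y] \cup \beta_{yz}$ on an interval $[-a, T]$, where $a := d(x,y)$: on $[-a, 0]$, $\alpha$ traces the geodesic $[x,y]$ at unit speed with $\alpha(-a) = x$ and $\alpha(0) = y$, and on $[0, T]$, $\alpha$ coincides with the original parametrisation of $\beta_{yz}$ running from $y$ to $z$. With this parametrisation fixed, the goal reduces to checking both quasi-geodesic inequalities for every pair of parameter values $u, v \in [-a, T]$.

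The upper bound $d(\alpha(u), \alpha(v)) \leq 3q|u-v| + Q$ is routine. When $u, v$ lie on the same piece of $\alpha$ the bound follows directly from the (quasi-)geodesic inequality enjoyed by that piece; when they lie on different pieces, inserting $y$ and applying the triangle inequality together with the individual bounds and the standing convention $q \geq 1$ closes the case.

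The substantive content is the lower bound, and the only nontrivial case is $p := \alpha(-\tau)$ on $[x,y]$ with $\tau \in [0,a]$ and $p' := \alpha(s)$ on $\beta_{yz}$ with $s \in [0,T]$. For this configuration I would establish two separate lower bounds on $d(p,p')$. The first is $d(p, p') \geq \tau$, which is where the closest-point hypothesis enters: since $d(x, p') \geq d(x, \beta) = d(x, y) = a$ while $d(x, p) = a - \tau$, the reverse triangle inequality yields $d(p, p') \geq a - (a-\tau) = \tau$. The second is $d(p, p') \geq \tfrac{s}{q} - Q - \tau$, obtained from the triangle inequality $d(y, p') \leq d(y, p) + d(p, p')$ combined with the quasi-geodesic lower bound $d(y, p') \geq \tfrac{s}{q} - Q$.

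The step I expect to require the most care is combining these two inequalities into the single bound $d(p, p') \geq \tfrac{\tau + s}{3q} - Q$. I would do a short case split on whether $s$ dominates a suitable multiple of $\tau$ or not: in the $\tau$-dominated regime the first inequality already yields the claim, while in the $s$-dominated regime the second does, and the threshold separating the regimes is chosen so that the inequality $q \geq 1$ makes both sides meet. Apart from this brief algebraic verification the rest of the argument is bookkeeping.
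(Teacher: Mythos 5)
Your proof is correct: the two lower bounds $d(p,p')\geq\tau$ (from the closest-point hypothesis) and $d(p,p')\geq \tfrac{s}{q}-Q-\tau$ do combine to give $\tfrac{\tau+s}{3q}-Q$, since the case split at $\tau=\tfrac{s}{3q-1}$ works precisely because $\tfrac{1}{3q-1}\leq\tfrac{2}{3q+1}$ when $q\geq 1$. The paper itself gives no proof of this lemma (it only cites \cite[Lemma 2.5]{QRT19} and notes that the CAT(0) hypothesis there is not used), and your argument is the standard one found in that reference.
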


\section{Morse is the same as strongly contracting in injective metric spaces}\label{sec:proof_main}

For a path $\beta$, if $x,y \in \beta,$ we use $\beta_{yz}$ to denote the subsegment of $\beta$ connecting $y,z.$ The following lemma, which is well-known, states that for any three points $x_1,x_2,x_3$ in an injective metric space, there exists an isometrically embedded tripod with the appropriate side lengths. More precisely, we have the following.

\begin{lemma}\label{lem:tripod} (Existence of tripods)
Let $X$ be an injective metric space. For any $x_1,x_2,x_3 \in X$ there exists a point $p\in X$ and geodesics $\beta^i$ from $x_i$ to $p$ such that for all $i\neq j$ the concatenation of $\beta_i$ and the reverse of $\beta_j$ is a geodesic from $x_i$ to $x_j$.
\end{lemma}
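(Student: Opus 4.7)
The plan is to choose for each $i\in\{1,2,3\}$ the Gromov product at $x_i$, namely
$$t_i=\tfrac12\bigl(d(x_i,x_j)+d(x_i,x_k)-d(x_j,x_k)\bigr),$$
where $\{i,j,k\}=\{1,2,3\}$, and then produce the tripod center $p$ by applying the injectivity axiom to the three closed balls $B(x_i,t_i)$. The triangle inequality gives $t_i\geq 0$, and a direct computation yields the crucial identity $t_i+t_j=d(x_i,x_j)$ for all $i\neq j$. In particular, the hypothesis $d(x_i,x_j)\leq t_i+t_j$ is satisfied (with equality), so by injectivity the intersection $\bigcap_i B(x_i,t_i)$ is nonempty and we may choose a point $p$ inside it, which will serve as the tripod center.

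Next I would verify that $d(x_i,p)=t_i$ for each $i$. Indeed, $p\in B(x_i,t_i)$ gives $d(x_i,p)\leq t_i$, and then
$$d(x_i,x_j)\leq d(x_i,p)+d(p,x_j)\leq t_i+t_j=d(x_i,x_j),$$
forcing equality everywhere. Since injective spaces are geodesic by Proposition \ref{prop:bicombing}, for each $i$ we may choose a geodesic $\beta^i$ from $x_i$ to $p$ of length $t_i$. For any $i\neq j$, the concatenation of $\beta^i$ with the reverse of $\beta^j$ is a continuous path from $x_i$ to $x_j$ whose length equals $t_i+t_j=d(x_i,x_j)$, and is therefore a geodesic, which is exactly what the statement asks for.

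There is no real obstacle in this argument; the only thing one has to notice is that the Gromov products $t_i$ are the correct radii to feed into the definition of injectivity, and once this is done every other step is forced. If anything, the mildly subtle point is making sure the pairwise-intersection hypothesis of injectivity really applies to just three balls (which is of course a special case of the general formulation already stated).
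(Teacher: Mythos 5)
Your proof is correct and follows essentially the same route as the paper: take the three Gromov products as radii, apply injectivity to the three pairwise-intersecting balls to get the center $p$, and use geodesicity to build the legs. Your write-up is in fact slightly more careful than the paper's, since you explicitly verify $d(x_i,p)=t_i$ and check that the concatenations have the right length.
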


\begin{proof}
Let $x_i$ be as in the statement and let $a_1,a_2, a_3$ be non-negative integers such that $d(x_1,x_2)=a_1+a_2, d(x_1,x_3)=a_1+a_3$ and $d(x_2,x_3)=a_2+a_3$ (that is, the $a_i$ are the various Gromov products of the $x_j$).  Take three balls $B_i$ around $x_i$ of radii $a_i$. Such balls mutually intersects, which means they totally intersect. Let $p$ be in their total intersection. Using our choice of $B_i$ and $a_i,$ the point $p$ satisfies the conclusion of the lemma (since injective spaces are geodesic, see Proposition \ref{prop:bicombing}).
\end{proof}

 We now show increasingly strong properties of projections to Morse subsets in an injective metric space. First, we show that given a point $x$, a Morse subset $Y$ in an injective metric space, and a point $y$ on $Y$ there exists a geodesic from $x$ to $y$ passing close to the projection of $x$; this is what the proof shows even though later on we only need the inequality stated in the following lemma.
 




\begin{lemma}
\label{lem:one_geod}
Let $X$ be an injective metric space. Let $Y$ be an  $M$-Morse subset and let $x \in X$. If $y \in \pi_Y(x)$ and $z \in Y$, then
$$d(x,z)\geq d(x,y)+d(y,z) - 2 M(1,0)-2.$$
\end{lemma}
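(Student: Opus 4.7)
The plan is to apply the tripod lemma (Lemma \ref{lem:tripod}) to the three points $x,y,z$ and then exploit that the resulting geodesic from $y$ to $z$, both of whose endpoints lie on $Y$, must remain close to $Y$ by Morseness.

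More precisely, I would start by invoking Lemma \ref{lem:tripod} to produce a point $p\in X$ and geodesics $\beta^x,\beta^y,\beta^z$ from $x,y,z$ to $p$, respectively, such that the concatenations $\beta^x\cdot\overline{\beta^y}$, $\beta^x\cdot\overline{\beta^z}$ and $\beta^y\cdot\overline{\beta^z}$ are geodesics from $x$ to $y$, from $x$ to $z$, and from $y$ to $z$. In particular
\[
d(x,z)=d(x,p)+d(p,z), \quad d(x,y)=d(x,p)+d(p,y), \quad d(y,z)=d(y,p)+d(p,z).
\]
Because $y,z\in Y$ and $Y$ is $M$-Morse, and the third concatenation is a $(1,0)$-quasi-geodesic with endpoints on $Y$, the point $p$ lies within $M(1,0)$ of some $p'\in Y$.

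Next I would use that $y\in\pi_Y(x)$: by definition $d(x,y)\le d(x,p')+1\le d(x,p)+M(1,0)+1$. Combined with $d(x,y)=d(x,p)+d(p,y)$ from the tripod, this gives the key bound
\[
d(p,y)\le M(1,0)+1.
\]
Substituting $d(x,p)\ge d(x,y)-d(p,y)$ and $d(p,z)=d(y,z)-d(y,p)$ into $d(x,z)=d(x,p)+d(p,z)$ yields
\[
d(x,z)\ge d(x,y)+d(y,z)-2d(p,y)\ge d(x,y)+d(y,z)-2M(1,0)-2,
\]
as desired.

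The main (in fact only) subtle point is controlling $d(p,y)$: a priori the tripod vertex $p$ could be arbitrarily far from the projection $y$. The argument above extracts this control by pairing Morseness of $Y$ (applied to the tripod's $y$-to-$z$ side, forcing $p$ near $Y$) with the near-minimality of $d(x,y)$ among distances from $x$ to points of $Y$. Everything else is straightforward bookkeeping on the tripod.
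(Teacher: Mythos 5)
Your proof is correct and follows essentially the same route as the paper: build the tripod via Lemma \ref{lem:tripod}, use Morseness of the $y$--$z$ side to place the branch point $p$ within $M(1,0)$ of $Y$, and then use near-minimality of $d(x,y)$ to bound $d(p,y)\leq M(1,0)+1$. The only cosmetic difference is that you inline the short computation that the paper delegates to Lemma \ref{rmk:proj_geod}.
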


\begin{proof}
We show that if $y \in \pi_Y(x)$ and $z \in Y$, then there is a geodesic $\beta$ such that $d(y, \beta) \leq M(1,0)+1$, which suffices.

By Lemma \ref{lem:tripod}, we get a point $p$ and geodesics $[x,p] \cup [p,z], [x,p] \cup [p,y], [z,p] \cup [p,y].$ Since $Y$ is $M-$Morse and $p$ lies on the geodesic $[z,p] \cup [p,y]$, we have $d(p,Y)\leq M(1,0)$. Further, since $[x,p] \cup [p,y]$ is a geodesic, by Lemma \ref{rmk:proj_geod} we have $d(p,y)\leq d(p,Y)+1$, and in turn $d(p,y)\leq M(1,0)+1$.
\end{proof}

The next lemma improves upon the previous one by showing that, in the same setup, \emph{any} geodesic passes close to the projection point.

\begin{lemma}
\label{lem:all_geod}
Let $X$ be an injective metric space. Let $Y$ be an $M$-Morse subset and let $x \in X$. Then there exists a constant $C,$ depending only on $M$, such that for any $y\in \pi_Y(x), z \in Y,$ and any geodesic $\alpha$ from $x$ to $z$, we have $d(y, \alpha) \leq C.$
\end{lemma}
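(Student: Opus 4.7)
The plan is to bound $d(y,\alpha)$ by letting $p$ denote the closest point on $\alpha$ to $y$ and then bounding $d(p,y)$ by a constant depending only on the Morse gauge $M$. Once this is done, $d(y,\alpha)\leq d(y,p)\leq C$, as desired. The natural starting point is to transfer the Morse hypothesis to $\alpha$ via the quasi-geodesic stability coming from Lemma \ref{lem:QRT}, and then upgrade ``close to $Y$'' to ``close to $y$'' using the near-tight triangle inequality provided by Lemma \ref{lem:one_geod}.

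First I would apply Lemma \ref{lem:QRT} with $y$ playing the role of the external point and $\alpha$ the underlying $(1,0)$-quasi-geodesic, so that $[y,p]\cup\alpha_{p,z}$ is a $(3,0)$-quasi-geodesic from $y$ to $z$. Since both endpoints lie in $Y$, the $M$-Morseness of $Y$ forces this quasi-geodesic to lie inside $N_{M(3,0)}(Y)$. In particular $d(p,Y)\leq M(3,0)$, so I can choose $y^*\in\pi_Y(p)$ with $d(p,y^*)\leq M(3,0)+1$.

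Next, I would try to show $d(y,y^*)\leq C'$ for some $C'$ depending only on $M$, which combined with the triangle inequality gives $d(p,y)\leq d(p,y^*)+d(y^*,y)\leq C$. To this end I would apply Lemma \ref{lem:tripod} to the triple $(x,y,y^*)$: since Lemma \ref{lem:one_geod} applied to $(x,Y,y,y^*)$ gives $d(x,y^*)\geq d(x,y)+d(y,y^*)-2M(1,0)-2$, the tripod leg at $y$ has length $(x|y^*)_y\leq M(1,0)+1$, and hence there is a geodesic $[x,y^*]$ passing within $M(1,0)+1$ of $y$. I would also record the dual estimate coming from Lemma \ref{lem:one_geod} applied to $(p,Y,y^*,z)$, namely $d(p,z)\geq d(p,y^*)+d(y^*,z)-2M(1,0)-2$, and combine it with $d(x,p)+d(p,z)=d(x,z)\leq d(x,y)+d(y,z)$ and the triangle inequality $d(y,z)\leq d(y,y^*)+d(y^*,z)$ to squeeze the position of $y^*$ along a near-geodesic from $y$ to $z$.

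The main obstacle is then to close the loop and conclude that $y^*$ is actually within bounded distance of $y$, rather than being allowed to drift along this near-geodesic. Equivalently, I need to compare the tripod geodesic from $x$ to $y^*$ (which passes close to $y$) with the path $\alpha_{x,p}\cup[p,y^*]$ (which is close to $\alpha$), two paths from $x$ to $y^*$ of nearly the same length. In general metric spaces, two quasi-geodesics between the same endpoints need not stay close, so this is where the injective structure has to be used essentially: I expect to exploit the reversible conical bicombing from Proposition \ref{prop:bicombing}, together with one more round of tripod/Lemma \ref{lem:one_geod} estimates, to conclude that the excursion of $\alpha$ away from $y$ is bounded in terms of $M$ alone. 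This step is the crux of the lemma and will be the main difficulty.
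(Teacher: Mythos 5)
Your setup is sensible and shares several ingredients with the paper's argument (Lemma \ref{lem:QRT}, Morseness of $Y$, and the near-reverse triangle inequality of Lemma \ref{lem:one_geod}), but the proof is not complete: the step you yourself flag as ``the crux'' is genuinely missing, and the estimates you have assembled cannot close it. Concretely, with $p$ the closest point of $\alpha$ to $y$ and $y^*\in\pi_Y(p)$, everything you derive reduces to bounding $d(y,y^*)$ by $d(x,p)-d(x,y)$ plus constants; but $d(x,p)-d(x,y)\leq d(y,z)-d(p,z)\leq d(y,y^*)+d(p,y^*)+\text{const}$, so the chain is circular. All of your inequalities (near-additivity of $d(x,y^*)$, near-additivity of $d(p,z)$, $d(x,z)\approx d(x,y)+d(y,z)$) are simultaneously consistent with $y^*$ sitting arbitrarily far from $y$ in the direction of $z$: they only say the relevant triangle inequalities are nearly tight, which is exactly what happens when $p$ drifts toward $z$. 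Nothing you have written rules this out, and the appeal to the bicombing at the end is a hope rather than an argument.

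The paper's proof avoids this by making a different choice of $p$: after reducing to the case $d(x,z)\geq d(x,y)$, it takes $p$ to be the point of $\alpha$ at distance \emph{exactly} $d(x,y)$ from $x$, and lets $q$ be the closest point to $y$ on $\alpha_{xp}$. The conical bicombing is then used for a precise purpose: the bicombing geodesic $[y,q]$ stays inside the ball $B\bigl(x,d(x,p)\bigr)$ (both endpoints lie in it), so no point of $[y,q]$ is closer to $z$ than $p$ is, whence $p$ is a closest point to $z$ on the $(3,0)$-quasi-geodesic $[y,q]\cup\alpha_{qp}$. A second application of Lemma \ref{lem:QRT} makes $[y,q]\cup\alpha_{qz}$ a $(9,0)$-quasi-geodesic with endpoints on $Y$, so Morseness gives $p'\in Y$ with $d(p,p')\leq M(9,0)$. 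Now the exact equality $d(x,p)=d(x,y)$ is what closes the loop: Lemma \ref{lem:one_geod} gives $d(x,p)\geq d(x,p')-d(p,p')\geq d(x,y)+d(y,p')-2M(1,0)-2-M(9,0)$, forcing $d(y,p')\leq M(9,0)+2M(1,0)+2$ and hence $d(y,p)\leq 2M(9,0)+2M(1,0)+2$. If you want to repair your write-up, the essential missing idea is this calibration of $p$ by distance from $x$ (rather than proximity to $y$), together with the containment of the bicombing geodesic in the ball about $x$; without some such upper bound on $d(x,p)$ your approach cannot succeed.
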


 
\begin{proof} 
 Fix the notation of the statement.
  If $d(x,z)<d(x,y)$ then using Lemma \ref{lem:one_geod} we get
 $$d(x,y) \geq d(x,z) \geq d(x,y)+d(y,z)-2M(1,0)-2,$$
 which gives us $d(y,\alpha)\leq d(y,z) \leq 2+2M(1,0)$. Hence, we may assume $d(x,z)\geq d(x,y)$.
 
 Let $p$ be the point along $\alpha$ lying at distance $d(x,y)$ from $x$. Let now $q\in \alpha_{xp}$ (recall that $\alpha_{xp}$ is the subpath of $\alpha$ from $x$ to $p$) be a point minimising the distance from $y$. Let $[y,q]$ be a geodesic connecting $y$ to $q$ given by the bicombing on $X$ from Proposition \ref{prop:bicombing}. The concatenation $\gamma$ of $[y,q]$ and $\alpha_{qp}$ is a $(3,0)$-quasi-geodesic by Lemma \ref{lem:QRT}.

 We now argue that $p$ is a closest point to $z$ on $\gamma$. Note $p$ is closest to $z$ on $\alpha_{qp}$, since the latter is contained in a geodesic to $z$. Moreover, since $[y,q]$ is assumed to be a bicombing geodesic and both $y,q$ live in a ball $B$ about $x$ of radius $d(x,p)$, the geodesic $[y,q]$ is entirely contained in $B$. Hence, no point on $[y,q]$ can be closer than $d(p,z)$ from $z$, as $d(x,z)=d(x,p)+d(p,z)$. Now, since $p$ is closest, we can again apply Lemma \ref{lem:QRT} and get that the concatenation of $[y,q]$ and $\alpha_{qz}$ is a $(9,0)$-quasi-geodesic. Since $Y$ is Morse and $p$ lies on said concatenation, we have $d(p,p')\leq M(9,0)$ for some $p'\in Y$. Using Lemma \ref{lem:one_geod}, we have
$$d(x,p)\geq d(x,p')-d(p',p)\geq d(x,y)+d(y,p')-2M(1,0)-2-M(9,0),$$
so that $d(y,p')\leq M(9,0)+2M(1,0)+2$, and hence $d(y,p)\leq 2 M(9,0)+2M(1,0)+2$, as required.
\end{proof}


Finally, we obtain Theorem \ref{thm:main}.

\begin{theorem}
\label{thm:main_body}
For any injective metric space $X$, a subset is Morse if and only if it is strongly contracting. Moreover, the contracting constant only depends on the Morse gauge and vice versa.
\end{theorem}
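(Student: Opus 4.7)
The theorem splits into two directions that I would handle separately.

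The direction strongly contracting $\Rightarrow$ Morse is standard and does not use injectivity. Given a $(\lambda,\varepsilon)$-quasi-geodesic $\beta$ with endpoints on a $D$-strongly contracting set $Y$, the plan is to bound the maximum distance $R$ from $\beta$ to $Y$ by the following argument: cover a portion of $\beta$ around its deepest point by balls disjoint from $Y$, use strong contraction to control how the projection to $Y$ moves between adjacent such balls, and compare with the $(\lambda,\varepsilon)$-quasi-geodesic inequality to force $R$ bounded by a function of $D,\lambda,\varepsilon$. This yields a Morse gauge depending only on $D$.

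For the main direction, Morse $\Rightarrow$ strongly contracting, I would exploit the three lemmas proved earlier in this section. Let $B=B(x,r)$ be a ball disjoint from $Y$ and take $x_1,x_2\in B$ with $y_i\in \pi_Y(x_i)$; the goal is to bound $d(y_1,y_2)$ by a constant depending only on the Morse gauge $M$. First, I would apply Lemma \ref{lem:all_geod} to a geodesic from $x_1$ to $y_2$: this geodesic passes within $C(M)$ of $y_1$. Combined with Lemma \ref{lem:one_geod} at $x_1$ this yields $d(x_1,y_1)+d(y_1,y_2)-d(x_1,y_2)\leq 2M(1,0)+2$, and symmetrically at $x_2$ one gets $d(x_2,y_2)+d(y_1,y_2)-d(x_2,y_1)\leq 2M(1,0)+2$. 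Summing these and applying the triangle inequalities $d(x_1,y_2)\leq d(x_1,x_2)+d(x_2,y_2)$ and $d(x_2,y_1)\leq d(x_1,x_2)+d(x_1,y_1)$ yields the coarse-Lipschitz estimate $d(y_1,y_2)\leq d(x_1,x_2)+2M(1,0)+2$.

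The main obstacle is upgrading this coarse-Lipschitz bound to a uniform one independent of $r$: the estimate above gives only $d(y_1,y_2)\leq 2r+O(M)$, which is not strong contraction. To remove the $r$-dependence I would invoke the conical bicombing from Proposition \ref{prop:bicombing}. Specifically, I would compare the bicombing geodesics $\gamma_{x,y_0}$ (where $y_0\in\pi_Y(x)$) and $\gamma_{x_1,y_1}$ via the conical inequality, evaluating at a parameter $t$ where both geodesics are constrained to be near $Y$ via Lemma \ref{rmk:proj_geod} applied to $\gamma_{x_1,y_1}$ (since $y_1\in\pi_Y(x_1)$). The conical inequality at this $t$, combined with the near-$Y$ constraint from Lemma \ref{rmk:proj_geod} and the control on $y_0$ provided by Lemma \ref{lem:all_geod}, produces a self-improving inequality on $d(y_0,y_1)$ that forces a uniform bound depending only on $M$, and hence via the triangle inequality a uniform bound on $d(y_1,y_2)$. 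The quantitative dependence between the contracting constant and the Morse gauge is then read off by tracking constants through the argument.
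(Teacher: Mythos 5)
The first direction (strongly contracting $\Rightarrow$ Morse) is standard and the paper simply cites \cite[Theorem 1.4]{ACGH}; your sketch of it is fine. Your coarse-Lipschitz estimate $d(y_1,y_2)\leq d(x_1,x_2)+2M(1,0)+2$ is also correct, and you rightly identify that the whole content of the theorem is in removing the dependence on $r$. But that is exactly where your argument has a genuine gap. The mechanism you describe --- applying the conical inequality to the two ``radial'' bicombing geodesics $\gamma_{x,y_0}$ and $\gamma_{x_1,y_1}$ at a parameter $t$ where they are ``near $Y$'' --- does not produce anything beyond the coarse-Lipschitz bound: writing $p=\gamma_{x,y_0}(t)$, $q=\gamma_{x_1,y_1}(t)$ and estimating $d(y_0,y_1)\leq d(y_0,p)+d(p,q)+d(q,y_1)$ with $d(p,q)\leq (1-t)d(x,x_1)+t\,d(y_0,y_1)$, the $t\,d(y_0,y_1)$ term cancels against the left side only after dividing by $(1-t)$, which reinstates a bound of order $d(x,y_0)+r$. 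Moreover, Lemma \ref{rmk:proj_geod} does not say these geodesics are near $Y$ (they start at distance $\geq r$ from $Y$); it only says that $y_1$ remains an almost-closest point of $Y$ to each point of $\gamma_{x_1,y_1}$. The asserted ``self-improving inequality'' is never exhibited, and I do not see how to extract one from the ingredients you list.

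The step you are missing is to run the bicombing geodesic \emph{between the two projection configurations}, not along each one. In the paper's proof one takes the bicombing geodesic $[y,x']$ from a point $y$ of the ball to $x'\in\pi_Y(x)$, the projection of the \emph{center}. The conical property forces every point of $[y,x']$ to lie within $A:=d(x,x')$ of $x$ (both endpoints do, and the conical inequality against the constant geodesic at $x$ gives $d(\gamma_{y,x'}(t),x)\leq(1-t)d(y,x)+t\,d(x',x)\leq A$). On the other hand, Lemma \ref{lem:all_geod} applied to $y$, $y'\in\pi_Y(y)$ and $x'\in Y$ says this same geodesic passes within $\delta=\delta(M)$ of $y'$. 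Hence $d(x,y')\leq A+\delta$, i.e.\ $y'$ is essentially no farther from the center than $x'$ is. Combining this with the reverse triangle inequality $d(x,y')\geq d(x,x')+d(x',y')-2\delta$ (Lemma \ref{lem:all_geod}/\ref{lem:one_geod} applied at $x$) yields $d(x',y')\leq 3\delta$, independent of $r$. Without some version of this ``mixed'' geodesic from a ball point to the center's projection, your argument does not close.
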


In light of the above Lemmas, the proof is exactly the same as the proof of Theorem 2.14 in \cite{charneysultan:contracting}. We include it for completeness.

\begin{proof}
The fact that strongly contracting subsets are Morse quantitatively is \cite[Theorem 1.4]{ACGH}, so we will only prove the converse. Let $Y$ be an $M$-Morse subset, $B$ be a ball centered at $x$ of radius $r$ with $B \cap Y=\emptyset.$ Let $y$ in $B$ and let $x' \in \pi_Y(x), y' \in \pi_Y(y)$. Define $A:=d(x,x')$ and notice that $A \geq r$. Choose a bicombing geodesic $[y,x']$. Lemma \ref{lem:all_geod} ensures the existence of a point $z \in [y,x']$ with $d(z,y') \leq \delta,$ where $\delta$ depends only on $M.$ Now, since $[y,x']$ is a bicombing geodesic, any point $z' \in [y, x']$ satisfies $d(z',x)\leq \max\{d(x,x'),d(x,y)\}\leq A$.
In particular, we have $d(z, x) \leq A.$ Hence, we have 
$$d(x,y') \leq d(x,z)+d(z,y') \leq A+\delta.$$
Now, applying Lemma \ref{lem:all_geod} to the triangle $x,x',y'$ gives us a point $w \in [x,y']$ with $d(x',w) \leq \delta$. Hence, 
$$d(x,y')=d(x,w)+d(w,y') \geq (A-\delta)+(d(x',y')-\delta)=A+d(x',y')-2\delta.$$
Combining the previous two inequalities gives

$$A+d(x',y')-2\delta \leq d(x,y') \leq A+\delta,$$ and hence $d(x',y') \leq 3 \delta.$ Hence, we have $\text{diam}(\pi_Y(B)) \leq 6 \delta$ completing the proof.
\end{proof}

In fact, the arguments above prove the following more general fact:

\begin{corollary}\label{cor:9_0}
Let $Y$ be a subset of an injective metric space. The following statements are all equivalent:

\begin{enumerate}
    \item $Y$ is Morse.
    \item $Y$ is strongly contracting.
    \item There exists an integer $K$ such that every $(9,0)$-quasi-geodesic with end points on $Y$ is contained in the $K$-neighborhood of $Y.$
    
\end{enumerate}

\end{corollary}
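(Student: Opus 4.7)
The equivalence (1)$\iff$(2) is exactly Theorem \ref{thm:main_body}, and (1)$\Rightarrow$(3) is immediate (just take $K=M(9,0)$). So the only content left is (3)$\Rightarrow$(1), and the plan is to in fact deduce (2) directly from (3) by inspecting the chain of arguments leading to Theorem \ref{thm:main_body} and checking that the Morse hypothesis is only ever used in a very restricted way.

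More precisely, I would go through Lemmas \ref{lem:one_geod} and \ref{lem:all_geod} and observe that in each case the Morse property is applied only to a path $\sigma$ which (a) is a $(9,0)$-quasi-geodesic and (b) has both endpoints in $Y$. In Lemma \ref{lem:one_geod}, $\sigma$ is the tripod side $[z,p]\cup[p,y]$, which is a geodesic (hence a $(9,0)$-quasi-geodesic) from $z\in Y$ to $y\in \pi_Y(x)\subseteq Y$; in Lemma \ref{lem:all_geod}, $\sigma$ is the concatenation $[y,q]\cup\alpha_{qz}$ produced via Lemma \ref{lem:QRT}, explicitly a $(9,0)$-quasi-geodesic from $y\in Y$ to $z\in Y$. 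Under hypothesis (3) we may therefore replace every occurrence of $M(1,0)$ and $M(9,0)$ in those proofs with the single constant $K$, obtaining the same statements with the constants now depending only on $K$. Finally, the proof of Theorem \ref{thm:main_body} invokes only Lemma \ref{lem:all_geod} (and basic bicombing properties), so the same derivation then yields that $Y$ is $D$-strongly contracting for some $D=D(K)$, closing the cycle (3)$\Rightarrow$(2)$\Rightarrow$(1)$\Rightarrow$(3).

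The main point to watch, rather than a real obstacle, is the bookkeeping: one has to confirm, at every place where $M(\cdot,\cdot)$ appears in the ambient Lemmas and Theorem, that the path to which the Morse property is applied does land in the $(9,0)$-quasi-geodesic class and does start and end on $Y$. No new geometric input is needed beyond Lemmas \ref{lem:tripod}, \ref{rmk:proj_geod}, \ref{lem:QRT}, and the conical bicombing from Proposition \ref{prop:bicombing} that were already used; the corollary is essentially a sharpening of the proof rather than a separate result.
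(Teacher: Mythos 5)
Your proposal is correct and is exactly the argument the paper intends: the corollary is stated with the remark that ``the arguments above prove'' it, meaning precisely that the Morse hypothesis in Lemmas \ref{lem:one_geod} and \ref{lem:all_geod} (and hence in Theorem \ref{thm:main_body}) is only ever invoked for geodesics and $(9,0)$-quasi-geodesics with endpoints on $Y$, so $M(1,0)$ and $M(9,0)$ may both be replaced by the constant $K$ of item (3). Your bookkeeping of where $M(\cdot,\cdot)$ appears, and the closing cycle $(3)\Rightarrow(2)\Rightarrow(1)\Rightarrow(3)$, matches the paper's route.
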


We remark that the bound in item 3 of the above corollary is sharper than the known bound in CAT(0) spaces (see Theorem 2.14 in \cite{charneysultan:contracting} and Theorem 3.10 \cite{QRT19}). Namely, for CAT(0) spaces, to ensure that a subset $Y$ is strongly contracting, one needs to require that $(32,0)$-quasi-geodesics with end points on $Y$ remain close to $Y$ while the above corollary only requires $(9,0)$-quasi-geodesics to stay close.
\section{Consequences}\label{sec:MLTG}

\subsection{Morse local-to-global} Roughly speaking, a space is said to be \emph{Morse local-to-global} \cite{Morse-local-to-global} if local Morse quasi-geodesics are global Morse quasi-geodesics.  The goal of this section is to prove that geodesic metric spaces where the notions of Morse and strongly contracting are equivalent are Morse local-to-global. In fact, we shall prove the more general fact that in any geodesic metric space, local strongly contracting quasi-geodesics are strongly contracting, and the conclusion will then follow for spaces where the notions of Morse and strongly contracting agree. We start by introducing the following notion.

\begin{definition}
A map $\gamma \colon I \to X$ is a \emph{$(L;D;k,c)$-local-strongly-contracting quasi-geodesic} if for any $[s,t]\subseteq I$ we have \[|s-t| \leq L \implies \gamma\vert_{[s,t]} \text{ is a } D\text{-strongly contracting } (k,c)\text{-quasi-geodesic}.\]
\end{definition}

The main proposition we shall prove in this section is the following.

\begin{proposition}\label{prop:contraction_local_to_global} Given $D, k, c$, there exists $L=L(D,k,c)$ such that the following holds. Every $(L;D;k,c)$-local-strongly contracting quasi-geodesic in a geodesic metric space is a $D$-strongly contracting $(k',c')$-quasi-geodesic with $k',c'$ depending only on $D,k$ and $c.$
\end{proposition}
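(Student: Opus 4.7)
The plan is to chain local pieces to promote local strong contraction to its global version. Fix $L$ sufficiently large compared to $D, k, c$ and cover the parameter domain $I$ with overlapping subintervals $J_n := [nL/2, nL/2+L]\cap I$; by hypothesis each restriction $\gamma|_{J_n}$ is a $D$-strongly contracting $(k,c)$-quasi-geodesic. The argument proceeds in two stages: first show that $\gamma$ is a global $(k',c')$-quasi-geodesic, then deduce global strong contraction.

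For the first stage, the engine is a \emph{bottleneck estimate} on a single local piece: whenever $s < u < t$ lie in a common $J_n$ with $u-s$ and $t-u$ bounded below by a threshold depending on $D,k,c$, every geodesic $\sigma$ from $\gamma(s)$ to $\gamma(t)$ comes within a uniform distance of $\gamma(u)$. This is obtained by projecting $\sigma$ onto the $D$-strongly contracting set $\gamma|_{J_n}$: if $\sigma$ strayed far from $\gamma|_{J_n}$ its projection would collapse into a set of diameter $\leq D$, while the endpoints of $\sigma$ project near $\gamma(s)$ and $\gamma(t)$ themselves (using Lemma \ref{rmk:proj_geod} together with a Lemma \ref{lem:QRT}-style splitting), forcing a point of $\sigma$ close to $\gamma(u)$. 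Chaining the bottleneck across consecutive midpoints $\gamma(nL/2)$ yields an additive lower bound $d(\gamma(s),\gamma(t))\geq\alpha|s-t|-\beta$, and the matching upper bound follows from triangulation through the same midpoints and the local quasi-geodesic estimate.

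For the second stage, take a ball $B$ disjoint from $\gamma$ and two projection points $p=\gamma(s)$ and $q=\gamma(t)$ in $\pi_\gamma B$, realised by witnesses $y,z\in B$. If $|s-t|\leq L/2$, then $p,q$ lie in a common $\gamma|_{J_n}$; since $\gamma|_{J_n}\subseteq\gamma$ the definition of projection forces $p\in\pi_{\gamma|_{J_n}}(y)$ and $q\in\pi_{\gamma|_{J_n}}(z)$, and local $D$-strong contraction on $\gamma|_{J_n}$ gives $d(p,q)\leq D+O(1)$. If $|s-t|>L/2$, I combine the bottleneck of Stage 1 with the disjointness $B\cap\gamma=\emptyset$: the quasi-geodesic lower bound forces $d(p,q)$ to be large, hence so is the radius of $B$ (via $d(p,q)\leq d(p,y)+d(y,z)+d(z,q)$), while the bottleneck places an intermediate $\gamma(u)$ within a controlled neighborhood of any geodesic from $y$ to $z$; choosing $L$ large enough so that all additive errors are absorbed produces an explicit point of $\gamma$ inside $B$, contradicting disjointness. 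Therefore $|s-t|\leq L/2$ always, and $\diam(\pi_\gamma B)$ is bounded by a constant depending only on $D,k,c$.

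The principal obstacle is the $|s-t|>L/2$ case in Stage 2: balls in a general geodesic metric space are not convex, so one cannot directly argue that paths between points of $B$ remain near $B$. The bottleneck lemma of Stage 1 is precisely the tool that bypasses this, converting a parameter-distance estimate into a geometric statement about arbitrary geodesics; the bookkeeping, namely choosing $L$ large enough to absorb the projection tolerance, the local additive constant $c$, and the bottleneck gauge so as to produce a genuine contradiction rather than a weaker inequality, is the technical heart of the proof.
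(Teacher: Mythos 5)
Your overall architecture matches the paper's (prove $\gamma$ is a quasi-geodesic and a strongly contracting set separately, reduce everything to single local pieces, take $L$ large), and your within-piece ``bottleneck estimate'' is a correct consequence of strong contraction of one piece. However, there is a genuine gap at the step ``chaining the bottleneck across consecutive midpoints yields an additive lower bound.'' The bottleneck you establish only concerns geodesics between two points of $\gamma$ lying in a \emph{common} local piece; it says nothing about where a point far along $\gamma$ (or a point off $\gamma$) projects to a given piece, and it does not chain in a general geodesic metric space. Local alignment of a chain of points does not imply linear progress without further input: points spaced along a circle of huge radius in $\mathbb{E}^2$ have every consecutive triple aligned up to an arbitrarily small additive error, yet the chain closes up. In a Gromov-hyperbolic ambient space one can chain bottlenecks because thin triangles let you transfer closeness statements along a geodesic, but here the ambient space is only assumed geodesic, so the chaining must be powered by the contraction itself. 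The missing ingredient is precisely the paper's ``bounded jumps'' lemma (Lemma \ref{lem:bounded_jumps}): for a concatenation $\gamma_1\ast\gamma_2$ of $D$-strongly contracting quasi-geodesics with concatenation point $p$, no point $x$ can have projections to both $\gamma_1$ and $\gamma_2$ far from $p$. Iterating this propagates projection information piece by piece: if $y$ projects to $\gamma$ near $\gamma(t)$, then its projection to each earlier piece is forced to sit within $D'$ of that piece's far endpoint, all the way back to the piece containing $\gamma(s)$. That is what simultaneously yields the lower bound $d(\gamma(s),\gamma(t))\gtrsim |s-t|$ (via the reverse triangle inequality for projections to a single contracting piece) and the contradiction in your Stage 2.

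The same gap resurfaces in your Stage 2 case $|s-t|>L/2$: the claim that ``the bottleneck places an intermediate $\gamma(u)$ within a controlled neighborhood of any geodesic from $y$ to $z$'' presupposes that the multi-piece subsegment $\gamma_{pq}$ is Morse (so that $[y,z]$ fellow-travels the quasi-geodesic $[y,p]\cup\gamma_{pq}\cup[q,z]$), which is part of what is being proved. The paper instead derives the contradiction entirely inside the \emph{first} piece: $x'$ lies in $\pi_{\gamma|_{J_1}}(x)$ at one endpoint of $J_1$, bounded jumps forces $\pi_{\gamma|_{J_1}}(y)$ to within $D'$ of the other endpoint, the endpoints of $J_1$ are farther apart than $2D'+D$, and $B$ is disjoint from $\gamma|_{J_1}$ --- contradicting $D$-strong contraction of that single piece. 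To repair your proof you would need to state and prove a cross-piece projection statement of bounded-jumps type; without it the two sentences doing the real work in your argument are unsupported.
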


Notice that the contraction constant $D$ in the hypotheses and conclusion is the same.



The following is similar to \cite[Lemma 2.15]{EikeZalloum}.

\begin{lemma}(Bounded jumps)\label{lem:bounded_jumps}
    For all $k,c,D$ there exists $D'$ with the following property. Let $\gamma$ be a $D$-strongly-contracting $(k,c)$-quasi-geodesic in a geodesic metric space $X$ and suppose that $\gamma$ consists of the concatenation $\gamma_1*\gamma_2$ of two $D$-strongly contracting $(k,c)$-quasi-geodesics, and call $p$ the concatenation point. Let $x\in X$ be such that $d(\pi_{\gamma_1}(x),p)\geq D'$. Then $d(\pi_{\gamma_2}(x),p)\leq D'$.
\end{lemma}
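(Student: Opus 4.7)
The plan is to argue by contradiction: assume both $d(\pi_{\gamma_1}(x),p)\geq D'$ and $d(\pi_{\gamma_2}(x),p)>D'$, and derive a contradiction for $D'$ chosen sufficiently large in terms of $D,k,c$ and the Morse gauge $M$ of $\gamma$ (which exists since $\gamma$ is strongly contracting, by \cite[Theorem~1.4]{ACGH}). Throughout, write $y_i:=\pi_{\gamma_i}(x)$.

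The first step I would carry out is a key preliminary observation: for every $u\in\gamma_1$, any point of $\pi_{\gamma_2}(u)$ lies within $2M(1,0)+1$ of $p$, and symmetrically with the roles of $\gamma_1$ and $\gamma_2$ swapped. Indeed, if $z\in\pi_{\gamma_2}(u)$ then, since $p\in\gamma_2$, the geodesic $[u,z]$ has length at most $d(u,p)+1$. By the Morse property of $\gamma$, $[u,z]$ lies in the $M(1,0)$-neighbourhood of the sub-quasi-geodesic of $\gamma$ from $u$ to $z$, which passes through $p$; so some $q\in[u,z]$ satisfies $d(q,p)\leq M(1,0)$, and the length bound forces $d(q,z)\leq M(1,0)+1$, yielding the claim.

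I would then split into three cases according to the relative sizes of $d(x,y_1)$ and $d(x,y_2)$. If $d(x,y_1)+2\leq d(x,y_2)$, then the closed ball $\overline{B}(x,d(x,y_1))$ is disjoint from $\gamma_2$ and contains both $x$ and $y_1$; strong contraction of $\gamma_2$ places $y_2$ and any element of $\pi_{\gamma_2}(y_1)$ within $D$ of each other, and combining with the preliminary observation gives $d(y_2,p)\leq D+2M(1,0)+1$, contradicting $d(y_2,p)>D'$. The symmetric case $d(x,y_2)+2\leq d(x,y_1)$ contradicts $d(y_1,p)\geq D'$ in the same way. In the remaining case $|d(x,y_1)-d(x,y_2)|\leq 2$, both $y_i$ lie in the set $\{y\in\gamma:d(x,y)\leq d(x,\gamma)+3\}$, whose diameter is bounded by some $C=C(D)$ using strong contraction of $\gamma$; hence $d(y_1,y_2)\leq C$. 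On the other hand, since $y_1\in\gamma_1$ and $y_2\in\gamma_2$ the sub-arc of $\gamma$ between them passes through $p$, so $(k,c)$-quasi-geodesicity of $\gamma$ gives the lower bound $d(y_1,y_2)\geq 2D'/k^2-O(c)$. Combining these two estimates forces an upper bound on $D'$, the desired contradiction.

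I expect the main difficulty to lie in the third case, which is the only one to invoke strong contraction of $\gamma$ itself (beyond those of $\gamma_1$ and $\gamma_2$) and couples it with a quasi-geodesic length lower bound through $p$. The two routine verifications needed are that strong contraction of $\gamma$ bounds the diameter of the $O(1)$-approximate projection set, and that the Morse gauge of $\gamma$ drives the preliminary observation uniformly across the case split.
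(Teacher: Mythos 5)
Your argument is correct, and its core mechanism is the same as the paper's: project $x$ to both pieces, apply strong contraction of one piece to a ball around $x$ that is disjoint from that piece but contains the projection of $x$ to the other piece, and use Morseness of $\gamma$ together with the closest-point property to force projections of points of $\gamma_1$ onto $\gamma_2$ to land near $p$ --- your preliminary observation is exactly the paper's concluding step. The difference is organizational: the paper avoids your case split entirely by assuming (WLOG) $d(x,y_1)\le d(x,y_2)$ and replacing $y_1$ by a point $x_1'$ one unit back along a geodesic $[x,y_1]$; then $d(x,x_1')\le d(x,y_2)-1\le d(x,\gamma_2)$, so the relevant ball is disjoint from $\gamma_2$ even when $d(x,y_1)=d(x,y_2)$, and your third case never arises. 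Your third case is nonetheless handled correctly, at the cost of two extra ingredients the paper does not need: strong contraction of $\gamma$ itself, to bound the diameter of the $3$-approximate projection set, and the quasi-geodesic length lower bound through $p$. One small imprecision, which the paper's own write-up shares: the Morse property as defined only gives that $[u,z]$ lies in the $M(1,0)$-neighbourhood of $\gamma$, whereas your preliminary observation needs the reverse inclusion, namely that $p$, a point of $\gamma$ between $u$ and $z$, lies near $[u,z]$. This is the standard bounded-Hausdorff-distance consequence of Morseness, but the resulting constant depends on $M$, $k$ and $c$, not just on $M(1,0)$; this does not affect the validity of the argument.
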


\begin{proof}
    Suppose by contradiction that we have points $x_i\in \pi_{\gamma_i}(x)$ with $d(x_i,p)\geq D'$ for some $D'=D'(K,C, D)$ to be determined (notice that in this formulation the statement is symmetric in $\gamma_1,\gamma_2$). Up to swapping indices, we can assume $d(x_1,x)\leq d(x_2,x)=:d$. We have $d(x,\gamma_2)\geq d-1$ by definition of projections, so that some point $x'_1$ within distance 1 of $x_1$ on a geodesic $[x,x_1]$ lies in a ball $B$ around $x$ disjoint from $\gamma_2$. Therefore $d(q, x_2)\leq D$ for any $q\in \pi_{\gamma_2}(x'_1)$. We now argue that $d(q,p)$ is uniformly bounded. This is because any geodesic $[x'_1,q]$ stays within distance bounded by some $B=B(D)$ from the subpath of $\gamma$ from $x_1$ to $q$, since said subpath is Morse with gauge depending on $D$ only. In particular, $[x'_1,q]$ passes $B$-close to $p$, and hence we would have $d(x'_1,p)< d(x'_1,q)-1$ if we have $d(p,q)>B+1$; therefore $d(p,q)\leq B$ as required. Hence, $d(x_2,p)\leq D+B$, a contradiction for $D'>D+B$.
\end{proof}

The proof of Proposition \ref{prop:contraction_local_to_global} will be broken into the following two lemmas. The first one establishes that local strongly contracting quasi-geodesics are strongly contracting \emph{sets}, and the second proves that they are indeed quasi-geodesics.

\begin{lemma} Let $D,k,c$ be given. There exists an $L=L(D,k,c)$ such that each $(L;D;k;c)$-local-strongly contracting path $\gamma:[a,b] \rightarrow X$, where $X$ is a geodesic metric space, is a $D$-strongly contracting set.
\end{lemma}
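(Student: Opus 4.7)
The plan is to argue by contradiction on the parameter distance between projection points, reducing to the Bounded Jumps Lemma (Lemma \ref{lem:bounded_jumps}). Let $D'=D'(D,k,c)$ be the constant from that lemma, and choose $L$ large enough in $D,k,c$ so that $\tfrac{L}{4k}-c>2D'$ and $\tfrac{L}{4k}-c-2D'>D$. Fix a ball $B$ disjoint from $\gamma$, points $x,y\in B$, and projection points $x'\in\pi_\gamma(x)$ and $y'\in\pi_\gamma(y)$, with parameters chosen so that $s(x')\leq s(y')$. A preliminary observation I would use is that if $x'$ lies in a subpath $\alpha\subseteq\gamma$, then $x'\in\pi_\alpha(x)$ as well, since $d(x,\alpha)\leq d(x,x')\leq d(x,\gamma)+1\leq d(x,\alpha)+1$.

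First, suppose $s(y')-s(x')\leq L$. Then the subpath $\alpha=\gamma|_{[s(x'),s(y')]}$ has parameter length at most $L$, so by the local hypothesis is a $D$-strongly contracting $(k,c)$-quasi-geodesic. The ball $B$ is disjoint from $\alpha$, and by the preliminary observation both $x'$ and $y'$ are projection points of $x,y$ onto $\alpha$; the $D$-strong contraction of $\alpha$ then gives $d(x',y')\leq D$ directly, which is exactly what we need.

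It remains to rule out $s(y')-s(x')>L$. In this case I would subdivide $\gamma|_{[s(x'),s(y')]}$ into $n\geq 3$ consecutive subpaths $\beta_1,\dots,\beta_n$, each of parameter length in $[L/4,L/2]$, with endpoints $p_0=x',p_1,\dots,p_n=y'$. Each $\beta_i$ and each concatenation $\beta_i*\beta_{i+1}$ has parameter length at most $L$, hence is a $D$-strongly contracting $(k,c)$-quasi-geodesic by the local hypothesis. The quasi-geodesic lower bound gives $d(p_{i-1},p_i)\geq L/(4k)-c>2D'$. I would then iterate Lemma \ref{lem:bounded_jumps}: for $x$, starting from $x'\in\pi_{\beta_1}(x)$ with $d(x',p_1)>D'$, the lemma applied to $\beta_1*\beta_2$ yields $d(\pi_{\beta_2}(x),p_1)\leq D'$; inductively, once $d(\pi_{\beta_i}(x),p_{i-1})\leq D'$ the bound $d(p_{i-1},p_i)>2D'$ forces $d(\pi_{\beta_i}(x),p_i)>D'$, so applying the lemma to $\beta_i*\beta_{i+1}$ gives $d(\pi_{\beta_{i+1}}(x),p_i)\leq D'$. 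A symmetric backward iteration, starting from $y'\in\pi_{\beta_n}(y)$ and using the $\gamma_1\leftrightarrow\gamma_2$ symmetry of Lemma \ref{lem:bounded_jumps}, yields $d(\pi_{\beta_i}(y),p_i)\leq D'$ for all $i\leq n-1$.

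Taking $i=2$, which is available because $n\geq 3$, we obtain points $x_2\in\pi_{\beta_2}(x)$ and $y_2\in\pi_{\beta_2}(y)$ with $d(x_2,p_1)\leq D'$ and $d(y_2,p_2)\leq D'$, so $d(x_2,y_2)\geq d(p_1,p_2)-2D'>D$. This contradicts the $D$-strong contraction of $\beta_2$, since $B$ is disjoint from $\beta_2$ and $x_2,y_2\in\pi_{\beta_2}(B)$. The main obstacle is ensuring the Bounded Jumps Lemma actually applies at every step of the iteration; this requires each concatenation $\beta_i*\beta_{i+1}$ itself to be $D$-strongly contracting and $(k,c)$-quasi-geodesic, which is precisely why the local hypothesis must hold at a scale $L$ comfortably larger than the length of each individual piece $\beta_i$.
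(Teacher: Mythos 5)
Your proof is correct and follows essentially the same strategy as the paper's: subdivide the stretch of $\gamma$ between the two projection points into pieces of controlled parameter length, iterate the Bounded Jumps Lemma along consecutive pairs, and contradict the $D$-strong contraction of a single piece whose projection from $B$ has become too large. The only cosmetic differences are that you iterate from both ends and meet at $\beta_2$ (the paper iterates only from the $y'$-end down to the first piece, using $x'$ itself as its endpoint) and that you make explicit the observation that projection points to $\gamma$ remain projection points for subpaths containing them, which the paper uses implicitly.
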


\begin{proof} Choose $L$ large enough to be determined later. Let $B$ be a ball disjoint from $\gamma$, and we claim that the projection of $B$ to $\gamma$ is entirely contained in a subsegment $\gamma|_I \subseteq \gamma$ with $|I| \leq L$. Once this claim is established we will be finished as $\gamma|_I$ is $D$-strongly contracting. 

Suppose the claim does not hold, that is to say, $B$ contains two points $x,y$ whose projections $\pi_\gamma(x), \pi_\gamma(y)$ contain points $x',y'$ such that the $\gamma$-subsegment connecting $x',y'$, denoted $\gamma|_J$, satisfies $|J|>L.$ Subdivide $\gamma|_J$ into subsegments $\gamma|_{J_1}, \cdots, \gamma|_{J_n}$ with
$$(2D'+D)k+kc+D<|J_i| \leq \frac{L}{2}$$
for each $i \in \{1,2,\cdots n\}$, where $D'$ is as in Lemma \ref{lem:bounded_jumps} (this can be done provided $L> 4((2D'+D)k+kc+D)$). In particular, $x'$ is the initial point of $\gamma|_{J_1}$ and $y'$ is the terminal point of $\gamma|_{J_n}$. Since each $|J_i| \leq \frac{L}{2}$ for $i\geq 1,$ the subsegments $\gamma|_{J_i \cup J_{i+1}}$ are all $(k,c)$-quasi-geodesic which are $D$-strongly contracting. Notice that $y'$, which is the terminal point of $\gamma|_{J_n},$ lives in $\pi_{\gamma|_{J_{n}}}(y)$. Hence, applying Lemma \ref{lem:bounded_jumps} to $\gamma|_{J_{n-1} \cup J_n}=\gamma|_{J_{n-1}} \ast \gamma|_{J_n}$ we see that each projection of $y$ to $\gamma|_{J_{n-1}}$ is within $D'$ of the point $\gamma|_{J_{n-1}} \cap \gamma|_{J_{n}}$, hence further than $D'$ from the other endpoint of $\gamma|_{J_{n-1}}$ since the distance between the endpoints of $\gamma|_{J_{n-1}}$ is at least $|J_{n-1}|/k-c>2D'+D\geq 2D'$. Repeating this process $(n-1)$-times shows that the projection of $y$ to $\gamma|_{J_1}$ is within $D'$ of the endpoint $\gamma|_{J_1}\cap \gamma_{J_2}$ of $\gamma|_{J_1}$. That is, the $D'$-neighborhoods of the two endpoints of $\gamma|_{J_1}$ contain points from $\pi_{\gamma|_{J_1}}(x)$ and $\pi_{\gamma|_{J_1}}(y)$ respectively. Since the distance between the endpoints of $\gamma|_{J_1}$ is larger than $2D'+D$ (as shown above for $\gamma|_{J_{n-1}}$), we see that $B$, which is disjoint from $\gamma|_{J_1}$, has projection of size larger than $D$, contradicting that $\gamma|_{J_1}$ is $D$-strongly contracting. This concludes the proof.
\end{proof}

\begin{lemma}
Continuing with the assumptions and notations from the previous lemma, the path $\gamma$ is a $(k',c')$-quasi-geodesic where $k',c'$ depend only on $D,k,c.$
\end{lemma}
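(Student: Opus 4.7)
The plan is to establish the two defining affine inequalities for a $(k',c')$-quasi-geodesic separately. The upper bound $d(\gamma(s),\gamma(t))\leq (k+c/L)|s-t|+c$ is routine: subdivide $[s,t]$ into at most $\lceil |s-t|/L\rceil$ consecutive subintervals of length at most $L$, use that $\gamma$ is a $(k,c)$-quasi-geodesic on each, and apply the triangle inequality. Since $L=L(D,k,c)$, the resulting constants depend only on $D,k,c$.

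For the lower bound, I will subdivide $[s,t]$ into $n$ consecutive subintervals of length in $[\ell_0,2\ell_0]$, with $\ell_0\leq L/3$ chosen large as a function of $D,k,c$; set $p_i=\gamma(s_i)$ and $Y_i=\gamma|_{[s_i,s_{i+1}]}$. By hypothesis every $Y_i$ is $D$-strongly contracting and a $(k,c)$-quasi-geodesic, and every adjacent concatenation $Y_{i-1}*Y_i$ has length at most $3\ell_0\leq L$ so is also $D$-strongly contracting and a $(k,c)$-quasi-geodesic; hence Lemma \ref{lem:bounded_jumps} applies to each such pair. Exactly as in the previous lemma's proof, iterated backward application of Lemma \ref{lem:bounded_jumps} yields the following \emph{Key Claim}: for every $0\leq i\leq n-2$, the projection $\pi_{Y_i}(p_n)$ lies within $D'$ of $p_{i+1}$, and symmetrically $\pi_{Y_i}(p_0)$ lies within $D'$ of $p_i$ for $1\leq i$. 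The first size requirement on $\ell_0$, namely $\ell_0/k-c-D'>D'$, is what makes the inductive step of this claim go through.

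With the Key Claim in hand, I will show by induction on $m$ that $d(p_0,p_m)\geq m\, C_0$ for some positive $C_0=C_0(D,k,c)$. The inductive step applies, with $Y:=Y_{m-1}$, the standard consequence of $D$-strong contraction asserting that, whenever $d(\pi_Y(x),\pi_Y(y))$ exceeds a threshold depending only on $D$, one has
\[
d(x,y)\;\geq\; d(x,\pi_Y(x))+d(\pi_Y(x),\pi_Y(y))+d(\pi_Y(y),y)-C_{\mathrm{proj}}(D).
\]
Plugging in $x=p_0$, $y=p_m$, using the Key Claim to put the two projections within $D'+1$ of $p_{m-1}$ and $p_m$ respectively, and using $d(p_{m-1},p_m)\geq\ell_0/k-c$, gives $d(p_0,p_m)\geq d(p_0,p_{m-1})+C_0$ provided $\ell_0$ is also chosen so that $C_0:=\ell_0/k-c-2D'-2-C_{\mathrm{proj}}>0$. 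Combining with $n\geq (t-s)/(2\ell_0)$ produces the desired affine lower bound on $d(\gamma(s),\gamma(t))$ with constants depending only on $D,k,c$.

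The main obstacle is the ``passing close to projection'' inequality displayed above, which is a standard fact about strongly contracting sets in geodesic metric spaces (see e.g.\ \cite{Arzhantseva2015, ACGH}) but is not explicitly recorded in the present paper. It can either be invoked from the literature, or derived directly from the Morse property of $Y$ (established in the previous lemma) together with Lemma \ref{lem:QRT}, by verifying that the three-piece concatenation $[x,\pi_Y(x)]\cup[\pi_Y(x),\pi_Y(y)]\cup[\pi_Y(y),y]$ of geodesics is a quasi-geodesic with constants depending only on $D$ whenever $d(\pi_Y(x),\pi_Y(y))$ is large enough.
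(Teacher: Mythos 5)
Your argument is correct, and for the substantive half (the lower bound) it takes a genuinely different final step from the paper, while sharing the same engine. Both proofs first establish your ``Key Claim'' by iterating Lemma \ref{lem:bounded_jumps} exactly as in the previous lemma: the projections of the two endpoints of $\gamma$ to any window of parameter length comparable to $L$ land $D'$-close to the two distinct ends of that window, hence are far apart. From there the paper lets $\beta$ be a geodesic joining the endpoints of $\gamma$ and invokes the standard fact that a geodesic whose endpoints have far-apart projections to a strongly contracting set must pass $5D$-close to it, concluding (somewhat tersely) that $\gamma$ fellow-travels $\beta$ and is therefore a quasi-geodesic; you instead feed the Key Claim into the reverse triangle inequality for strongly contracting sets and run an induction showing $d(p_0,p_m)\geq mC_0$, which yields the affine lower bound for \emph{every} pair of parameters directly and with explicit constants. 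The two ``standard facts'' being invoked are essentially equivalent, and the one you worry is ``not explicitly recorded in the present paper'' in fact appears verbatim (with the same references) in the proof of Lemma \ref{lem:adding_points}, so you may simply quote it from there rather than rederiving it via Lemma \ref{lem:QRT}. Your route arguably makes the last step of the paper's proof more transparent, at the cost of an extra induction. Two trivial points to tidy: with pieces of length in $[\ell_0,2\ell_0]$ an adjacent pair can have length up to $4\ell_0$, so require $\ell_0\leq L/4$ (or make all pieces of length exactly $\ell_0$ except the last); and the case $|s-t|\leq \ell_0$, where no subdivision is possible, should be dispatched by the local hypothesis directly, as you implicitly do.
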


\begin{proof} Let $L$ be the constant from the previous lemma. Since our ultimate goal is to prove Proposition \ref{prop:contraction_local_to_global}, we may enlarge $L$, as long as its value depends only on $D,k,c.$ Let $\beta$ be a geodesic connecting the endpoints of $\gamma$ and let $\gamma|_J$ be any sub-path of $\gamma$ with $|J| = L.$ Using our assumptions, the path $\gamma|_J$ is a $(k,c)$-quasi-geodesic which is $D$-strongly contracting. Arguing exactly as in the previous lemma, if $x,y$ are the endpoints of $\gamma,$ then $\pi_{\gamma|_J}(x), \pi_{\gamma|_J}(y) $ are within $D'=D'(D,k,c)$ of the two (distinct) endpoints of $\gamma|_J$. In particular, $d(\pi_{\gamma|_J}(x), \pi_{\gamma|_J}(y)) \geq L-2D'.$ Since $\gamma|_J$ is $D$-strongly contracting, if $L$ is large enough (depending only on $D,k,c)$, the geodesic $\beta$ must pass $5D$-close to $\gamma|_J$ (see e.g. the proof of \cite[Lemma 4.4]{charneysultan:contracting}, see also \cite{contractingrandom,ACGH, EikeZalloum}). That is, the geodesic $\beta$ must pass within $5D$ of every subsegment $\gamma_J$ with $|J|=L$. This ensures that $\gamma$ is a $(k',c')$-quasi-geodesic with $k',c'$ depending only on $L,5D$ both of which depend only on $D,k,c,$ concluding the proof.
\end{proof}

In particular, the previous two lemmas provide a proof of Proposition \ref{prop:contraction_local_to_global}. For the convenience of the reader, we now recall the definition of the Morse local-to-global property:

\begin{definition}[{\cite[Definition 2.12]{Morse-local-to-global}}]\label{defn:Morsel_local_to_global}
Let $M \colon [1,\infty) \times [0,\infty) \to [0,\infty)$ and $B \geq 0$.
If $\gamma$ is an $M$-Morse $(k,c)$-quasi-geodesic, we say $\gamma$ is a $(M;k,c)$-Morse quasi-geodesic. A map $\gamma \colon I \to X$ is a \emph{$(B;M;k,c)$-local Morse quasi-geodesic} if for any $[s,t]\subseteq I$ we have \[|s-t| \leq B \implies \gamma\vert_{[s,t]} \text{ is a } (M;k,c)\text{-Morse quasi-geodesic}.\]

A metric space $X$ has the \emph{Morse local-to-global property} if for every Morse gauge $M$ and constants $k\geq 1$, $c \geq 0$, there exists a local scale $B \geq 0$, Morse gauge $M'$, and constants $k' \geq 1$, $c'\geq 0$ such that every $(B;M;k,c)$-local Morse quasi-geodesic is a global $(M';k',c')$-Morse quasi-geodesic. 
\end{definition}

The following proposition is immediate by combining Theorem \ref{thm:main_body} and Proposition \ref{prop:contraction_local_to_global}.

\begin{proposition}\label{thm:MLTG_body}
Every metric space where Morse quasi-geodesics are strongly contracting quantitatively has the Morse local-to-global property. In particular, injective metric spaces and Helly groups have the Morse local-to-global property.
\end{proposition}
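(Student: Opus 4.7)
The plan is to combine Theorem \ref{thm:main_body} with Proposition \ref{prop:contraction_local_to_global} in a straightforward way. Given a Morse gauge $M$ and quasi-geodesic constants $k,c$, Theorem \ref{thm:main_body} provides a contraction constant $D=D(M)$ such that every $M$-Morse subset of $X$ is $D$-strongly contracting. I would then let $L=L(D,k,c)$ be the local scale from Proposition \ref{prop:contraction_local_to_global} and set the local scale for the Morse local-to-global property to be $B:=L$.

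Now suppose $\gamma\colon I\to X$ is a $(B;M;k,c)$-local Morse quasi-geodesic. Then for any $[s,t]\subseteq I$ with $|s-t|\leq B=L$, the restriction $\gamma|_{[s,t]}$ is an $M$-Morse $(k,c)$-quasi-geodesic, hence $D$-strongly contracting by Theorem \ref{thm:main_body}. In other words, $\gamma$ is an $(L;D;k,c)$-local-strongly-contracting quasi-geodesic. Proposition \ref{prop:contraction_local_to_global} then yields that $\gamma$ is a global $D$-strongly contracting $(k',c')$-quasi-geodesic, where $k',c'$ depend only on $D,k,c$, and hence only on $M,k,c$. Applying \cite[Theorem 1.4]{ACGH} (as used in the proof of Theorem \ref{thm:main_body}), global $D$-strong contraction promotes to global $M'$-Morseness, with $M'$ depending only on $D$, i.e.\ only on $M$. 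This verifies the Morse local-to-global property as in Definition \ref{defn:Morsel_local_to_global}.

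For the ``in particular'' clause, injective metric spaces satisfy the hypothesis by Theorem \ref{thm:main_body}, and Helly groups act properly and cocompactly on Helly graphs, which by \cite[Proposition 3.12]{CCGHO20} are coarsely dense (hence equivariantly quasi-isometric) to their injective hulls; since the Morse local-to-global property is a quasi-isometry invariant of geodesic spaces (see \cite[Theorem 1.3]{Morse-local-to-global}), the conclusion descends to Helly groups.

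There is essentially no obstacle here, since all the work has been done in Theorem \ref{thm:main_body} and Proposition \ref{prop:contraction_local_to_global}; the only thing to be slightly careful about is tracking quantifiers to ensure that the new Morse gauge $M'$ and quasi-geodesic constants $k',c'$ depend only on the input data $M,k,c$, which is transparent from the chain $M\leadsto D\leadsto (k',c')\leadsto M'$ above.
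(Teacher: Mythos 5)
Your proposal is correct and is exactly the argument the paper intends: the paper simply states that the proposition is immediate from combining Theorem \ref{thm:main_body} with Proposition \ref{prop:contraction_local_to_global}, and your write-up fills in precisely that combination (local Morse $\Rightarrow$ local strongly contracting $\Rightarrow$ global strongly contracting $\Rightarrow$ global Morse via \cite[Theorem 1.4]{ACGH}), with the constants tracked correctly. The only cosmetic point is that for the general first sentence of the statement one should cite the hypothesis ``Morse implies strongly contracting quantitatively'' directly rather than Theorem \ref{thm:main_body}, reserving the latter for the ``in particular'' clause, as you in fact do there.
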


\subsection{Acylindrical hyperbolicity} \label{subsec:a.h}

\begin{corollary}\label{thm:ah_body}
If $G$ is a group acting properly and coboundedly on an injective metric space $X$ whose Morse boundary $\partial_M X$ contains at least 3 points, then $G$ is acylindrically hyperbolic. In particular, Helly groups whose Morse boundary contain at least 3 points are acylindrically hyperbolic.
\end{corollary}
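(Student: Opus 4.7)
The plan is to reduce to applying the projection complex machinery of \cite{Bestvina2014,bestvinabrombergfujiwarasisto:acylindrical} to a strongly contracting element of $G$; the main task is to produce such an element from the three given Morse boundary points. Before starting, I observe that $G$ cannot be virtually cyclic, since a proper cobounded action of a virtually cyclic group on $X$ would force $X$ to be quasi-isometric to a point or a line, whose Morse boundaries have fewer than three points.

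To manufacture a Morse element I would choose three distinct $\xi_1,\xi_2,\xi_3 \in \partial_M X$, represented by Morse rays from a fixed basepoint $x_0$ with a common Morse gauge. Using coboundedness, I can find sequences of group elements translating $x_0$ along these rays, and then run a standard north--south / ping-pong argument on the dynamics near $\partial_M X$ to produce an element $g \in G$ whose bi-infinite orbit $\{g^n\cdot x_0\}_{n\in\mathbb{Z}}$ is \emph{locally} a Morse quasi-geodesic with a uniform gauge. The Morse local-to-global property for injective spaces, established in Proposition \ref{thm:MLTG_body}, then promotes this local data to the statement that the entire bi-infinite orbit is a global Morse quasi-geodesic, so $g$ has a Morse quasi-axis in $X$.

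By Theorem \ref{thm:main_body}, this quasi-axis is strongly contracting. Since $G$ is not virtually cyclic and acts properly and coboundedly on a geodesic metric space containing a strongly contracting quasi-axis, the projection complex construction of \cite{Bestvina2014} produces a hyperbolically embedded virtually cyclic subgroup containing $\langle g \rangle$ (see \cite{bestvinabrombergfujiwarasisto:acylindrical}), and acylindrical hyperbolicity follows. The chief obstacle in the argument is the ping-pong step that extracts uniform local Morse behaviour from three Morse boundary points; the Morse local-to-global property is precisely what converts this into a global Morse axis, after which the remainder is routine.
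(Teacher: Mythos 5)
Your proposal follows essentially the same route as the paper: the Morse local-to-global property (Proposition \ref{thm:MLTG_body}) is used to produce a Morse element, Theorem \ref{thm:main_body} upgrades its quasi-axis to strongly contracting, the projection-complex machinery of \cite{Bestvina2014} then gives acylindrical hyperbolicity, and the three Morse boundary points rule out the virtually cyclic case. The one step you single out as the chief obstacle---extracting an element with a locally Morse orbit---is precisely the content of \cite[Proposition 4.8]{Morse-local-to-global}, which the paper simply cites; you could cite it too rather than re-deriving it via your somewhat vague north--south/ping-pong sketch (which, as phrased, risks circularity, since good dynamics on $\partial_M X$ is usually a consequence of having a contracting element rather than an a priori tool).
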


\begin{proof}
Since $X$, whence $G$, has the Morse local-to-global property (Theorem \ref{thm:injectiveareMLTG}), by \cite[Proposition 4.8]{Morse-local-to-global}, $G$ contains a Morse element. In view of Theorem \ref{thm:main} said element has a strongly contracting orbit for the action on $X$. By \cite[Theorem H]{Bestvina2014}, $G$ acts on a hyperbolic space with a loxodromic WPD, so that $G$ is virtually cyclic or acylindrically hyperbolic. Since $G$ has 3 points in the Morse boundary it is not virtually cyclic, concluding the proof.
\end{proof}

\section{Strongly contracting geodesics persist in injective hulls}
\label{sec:contr_survives}

Given a metric space $X$, consider the set $\Delta:=\{f \in \mathbb{R}^X | f(x)+f(y) \geq d(x,y) \,\, \forall x,y \in X\}.$ An element $f \in \Delta$ is said to be of a \emph{metric form}. Define the set $\Delta^1:=\Delta \cap \text{Lip}^{1}(X, \mathbb{R})$ where $\text{Lip}^{1}(X, \mathbb{R})$ denotes the space of all $1$-Lipshitz maps $f:X \rightarrow \mathbb{R}$.  We equip the set $\Delta^1$ with the distance $d_\infty(f,g):=\text{sup}\{|f(x)-g(x)| \text{ where } \,\, x \in X\}$.

The space $\Delta$ has a natural poset structure where $f \leq g$ if and only if $f(x) \leq g(x)$ for all $x \in X.$ The \emph{injective hull} of a metric space $X$, denoted $E(X)$, is defined to be the collection of all minimal elements in the aforementioned poset on $\Delta$:
$$E(X):=\{f \in \Delta| \,\,\text{if}\,\, g \in \Delta, \text{we have}\,\, g \leq f \implies g=f\}.$$
 
As a metric space, the injective hull is the set $E(X)$ equipped with the $d_\infty$-distance.
We now summarize some properties of $E(X).$ For more details, see Section 3 of \cite{LANG2013}.

\begin{lemma}[{\cite[Section 3]{LANG2013}}]\label{lem:summary} For any metric space $X$, the injective hull $E(X)$ is a geodesic metric space with respect to $d_\infty$-distance. Furthermore, we have the following.

\begin{enumerate}
    \item There is an isometric embedding $e:(X,d) \rightarrow (E(X),d_\infty)$ given by $x \mapsto d(x,-).$
    \item For any $f \in \Delta^1,$ we have $f(x)=d_\infty(e(x),f)$ for all $x \in X.$
    
    \item  For $f \in \mathbb{R}^X$, we have $f \in E(X)$ if and only if  $f \in \Delta$ and for any $\epsilon >0$ and any $x \in X,$ there is some $y \in X$ with $f(x)+f(y) \leq d(x,y)+\epsilon.$ 
    
    \item $E(X) \subseteq \Delta^1.$

    \end{enumerate}

\end{lemma}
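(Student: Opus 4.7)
The plan is to derive items (1)--(4) directly from the definition of $E(X)$ as the set of minimal elements in $(\Delta,\le)$, via elementary pointwise manipulations of the metric-form inequality $f(x)+f(y)\ge d(x,y)$; geodesicity is then the genuinely substantive step.

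For item (1), the map $e(x):=d(x,\cdot)$ lies in $\Delta$ by the triangle inequality, and is minimal because any $g\in\Delta$ with $g\le e(x)$ satisfies $g(x)\le 0$, while $2g(x)\ge d(x,x)=0$ forces $g(x)=0$; then $g(y)\ge d(x,y)-g(x)=e(x)(y)$ gives $g=e(x)$. The isometry $d_\infty(e(x),e(y))=d(x,y)$ is the triangle inequality applied twice, with the supremum over $z$ attained at $z=x$.

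For (3) forwards, I would argue by contrapositive: if the $\epsilon$-approximation property fails at some $x_0$, there is a uniform slack $\delta>0$ with $f(x_0)+f(y)\ge d(x_0,y)+\delta$ for every $y$, so lowering $f(x_0)$ by a sufficiently small positive amount yields a strictly smaller element of $\Delta$, contradicting minimality. For the converse, given $g\in\Delta$ with $g\le f$, fix $x$ and $\epsilon>0$, pick $y$ with $f(x)+f(y)\le d(x,y)+\epsilon$, and compute $g(x)\ge d(x,y)-g(y)\ge d(x,y)-f(y)\ge f(x)-\epsilon$, forcing $g\ge f$. Item (4) then follows by a quick application of (3): for $f\in E(X)$ and $x,z\in X$, pick $y$ with $f(x)+f(y)\le d(x,y)+\epsilon$, combine with $f(y)+f(z)\ge d(y,z)$ and the triangle inequality $d(x,y)\le d(x,z)+d(z,y)$ to get $f(x)-f(z)\le d(x,z)+\epsilon$, and let $\epsilon\to 0$. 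For (2), once $f\in\Delta^1$ is known, the inequalities $f(y)-f(x)\le d(x,y)$ (Lipschitz) and $d(x,y)-f(y)\le f(x)$ (metric form) together yield $|f(y)-d(x,y)|\le f(x)$, with equality attained at $y=x$, so $d_\infty(e(x),f)=f(x)$.

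The main obstacle is establishing that $(E(X),d_\infty)$ is geodesic. My plan is first to prove $E(X)$ is itself injective as a metric space: given $\{f_i\}\subseteq E(X)$ and radii $r_i$ with $d_\infty(f_i,f_j)\le r_i+r_j$, the function $F(x):=\inf_i(f_i(x)+r_i)$ lies in $\Delta$ via the estimate $f_i(x)+f_j(y)\ge f_j(x)-d_\infty(f_i,f_j)+f_j(y)\ge d(x,y)-r_i-r_j$. A Zorn's lemma argument on $\{h\in\Delta:h\le F\}$ then produces a minimal $h\in E(X)$ lying in every $B(f_i,r_i)$: the bound $h\le f_i+r_i$ is immediate from $h\le F$, while the reverse bound $h\ge f_i-r_i$ uses (3) applied to each $f_i$ to produce witnesses $y$ with $f_i(x)+f_i(y)\le d(x,y)+\epsilon$, from which $h(x)\ge d(x,y)-h(y)\ge d(x,y)-f_i(y)-r_i\ge f_i(x)-\epsilon-r_i$. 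Once injectivity of $E(X)$ is in hand, midpoints come from applying it to a pair with $r_1=r_2=D/2$; iteration plus a standard completeness argument for $E(X)$ extends these to full geodesics. The truly delicate point is verifying this reverse bound, which is precisely where the tightness encoded in (3) is essential.
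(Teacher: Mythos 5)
Your argument is correct, and it reconstructs essentially the standard proof from Lang's paper, which is exactly what this lemma cites — the paper itself offers no proof, deferring entirely to \cite[Section 3]{LANG2013}, where the route is the same as yours: verify (1)--(4) by pointwise manipulation of the metric-form inequality, prove hyperconvexity of $E(X)$ via the infimum function $F(x)=\inf_i(f_i(x)+r_i)$ and Zorn's lemma, and deduce geodesicity from injectivity. The only spots worth a word of care are the forward direction of (3), where one should note that taking $y=x_0$ in the failed approximation property gives $2f(x_0)\geq\delta$, so that $f(x_0)$ can indeed be lowered by $\delta/2$ while keeping $g(x_0)\geq 0$, and the Zorn step, where one checks that the pointwise infimum of a chain in $\{h\in\Delta: h\leq F\}$ remains in $\Delta$.
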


Recall that Gromov's 4-point condition for a metric space $X$ requires the existence of a constant $\delta$ such that for all $x,y,w,z\in X$ we have 
$$d(x,y)+d(w,z)\leq \max\{d(x,z)+d(w,y),d(x,w)+d(y,z)\}+\delta.$$
For geodesic spaces, this condition is equivalent to hyperbolicity.

In the proofs below constants could be kept track of explicitly, but that would result in complicated expressions, so we decided instead to use the notation $x\approx y$ to denote that the quantities $x$ and $y$ coincide up to an additive error depending on the constant $C$ featured in the statements. The notation $x\lesssim y$ has a similar meaning. We will often abuse notation and identify a point $x \in X$ with its image $e(x)$.

The following lemma says that a strongly contracting and hyperbolic subset stays quasiconvex (with respect to geodesics) in the injective hull.

\begin{lemma}
\label{lem:contr_inj}
For all $C \geq 0$ there exists $D \geq 0$ with the following property. Let $X$ be a metric space and let $A\subseteq X$ be a $C$-strongly contracting subset satisfying Gromov's 4-point condition with constant $C$. Then any geodesic in $E(X)$ connecting points of $e(A)$ is contained in the $D$-neighborhood of $e(A)$.
\end{lemma}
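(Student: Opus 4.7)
The plan is to exhibit, for each $f$ lying on the given geodesic from $e(x)$ to $e(y)$, a point $a \in A$ with $f(a) \leq D$ for some $D = D(C)$: by Lemma~\ref{lem:summary}(2), $d_\infty(f, e(a)) = f(a)$, which yields the desired $D$-neighbourhood bound. Writing $t = f(x)$, the geodesic condition gives $f(x) + f(y) = d(x,y)$, so $f(y) = d(x,y) - t$. First I would choose $a \in A$ with $d(x,a) \approx t$ and $d(y,a) \approx d(x,y) - t$. Assuming $X$ is geodesic (the standard setting for this type of result), strong contraction of $A$ implies the Morse property, so any geodesic in $X$ from $x$ to $y$ lies in a $K_0(C)$-neighbourhood of $A$; one can then take $a \in A$ within $K_0$ of the point of this geodesic at parameter $t$.

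To bound $f(a)$, the key is the extremal characterisation $f(a) = \sup_{z \in X}(d(z,a) - f(z))$ coming from Lemma~\ref{lem:summary}(3), combined with the lower bounds $f(z) \geq d(x,z) - t$ and $f(z) \geq d(y,z) - (d(x,y) - t)$ arising from $f \in \Delta$. Rewriting in terms of Gromov products, these together give
\[
f(a) \;\leq\; \sup_{z \in X}\, \min\bigl(\,t + d(x,a) - 2(z|a)_x,\; (d(x,y) - t) + d(y,a) - 2(z|a)_y\,\bigr).
\]
Next I would project each $z \in X$ to $A$ by setting $z^* = \pi_A(z)$. Strong contraction of $A$ yields the bottleneck inequality $d(b,z) \approx d(b,z^*) + d(z^*,z)$ for every $b \in A$, which in particular gives $(z|a)_x \approx (z^*|a)_x$ and $(z|a)_y \approx (z^*|a)_y$ with additive error depending only on $C$. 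Hence the supremum is controlled, up to $O(C)$, by the same expression with $z^*$ ranging over $A$.

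For this final $A$-estimate I would apply Gromov's 4-point condition on $A$ to the quadruple $\{x,y,a,z^*\}$. A direct algebraic unfolding shows it is equivalent to the dichotomy
\[
(z^*|a)_x \geq (a|y)_x - C/2 \quad \text{or} \quad (z^*|a)_y \geq (a|x)_y - C/2.
\]
Our choice of $a$ ensures $(a|y)_x \approx t$ and $(a|x)_y \approx d(x,y) - t$, so in the first alternative the first entry of the min is $O(C)$, and in the second alternative the second entry is. Either way the min is $O(C)$ uniformly in $z^* \in A$, giving $f(a) \leq D(C)$. The main technical point---and the essential role of strong contraction---is the Gromov-product transfer via the bottleneck property: without it, the 4-point condition on $A$ alone would not suffice to control a supremum indexed by the potentially much larger space $X$.
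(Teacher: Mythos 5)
Your proof is correct and follows essentially the same route as the paper's: choose the candidate point $a\in A$ at the right parameter via the Morse property of a geodesic in $X$, realise $f(a)$ by a (near-)optimal witness $z$ via Lemma \ref{lem:summary}(3), transfer $z$ to $z^*\in\pi_A(z)$ using the bottleneck consequence of strong contraction, and close with Gromov's 4-point condition on the quadruple $\{x,y,a,z^*\}\subseteq A$ — the paper does exactly this with its quadruple $\{a,b,x',c\}$, just written in raw distances rather than Gromov products. Your explicit flagging of the geodesicity of $X$ is apt: the paper's proof uses it implicitly in the same place (to produce the point $c$ on $A$ at the prescribed parameter), even though the lemma's statement omits the hypothesis.
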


\begin{proof}
To save notation, we will identify $X$ with $e(X)$. Let $f\in E(X)$ lies on a geodesic connecting $a$ to $b$ for some $a,b\in A$. Since $f$ lives on a geodesic connecting $a,b$, we have $d_\infty(f,a)+d_\infty(f,b)=d_\infty(a,b)=d(a,b).$ On the other hand, by items 2 and 4 of Lemma \ref{lem:summary}, we have $f(a)=d_\infty(f,a)$ and $f(b)=d_\infty(f,b).$ Hence, we have $f(a)+f(b)=d(a,b)$. Since geodesics in $X$ connecting points of $A$ stay close to $A$, there exists $c\in A$ with $d(c,a)\approx f(a)$ and $d(c,b)\approx f(b)$. We want to estimate $f(c)=d_\infty(f,e(c))$. By item 3 of Lemma \ref{lem:summary}, since $f \in E(X),$ there exists $x\in X$ with $f(c)\approx d(x,c)-f(x)$, and by strong contraction we have $d(x,c)\approx d(x,x')+d(x',c)$ for some $x' \in \pi_A(x)$. Putting these together we have:

$$d(a,b)+f(c) \approx d(a,b) +d(x',c) + d(x,x') -f(x).$$

We now have two cases from Gromov's 4-point condition, we do the case $d(a,b) +d(x',c)\leq d(a,x')+d(b,c)+C$, the other case being similar. Continuing the inequality above, we get

$$\lesssim d(a,x')+d(b,c) +d(x,x') -f(x) \approx d(a,x)-f(x) +d(b,c),$$
where again we used strong contraction. Since $f$ is a metric form, we continue with 

$$ \lesssim f(a)+d(b,c)\approx d(a,c)+d(c,b)\approx d(a,b).$$

Putting everything together we get $d(a,b)+f(c)\lesssim d(a,b)$, so that $f(c)\approx 0$, as required.
\end{proof}

The following lemma gives the "reverse triangle inequality" which would follow from knowing that geodesics to (the image of) a strongly contracting and hyperbolic subspace of $X$ pass close to projection points. We will actually use this lemma to prove this fact about geodesics later.

\begin{lemma}
\label{lem:weak_contr_inj}
For all $C \geq 0$ there exists $D \geq 0$ with the following property. Let $A\subseteq X$ be $C$-strongly contracting satisfying Gromov's 4-point condition with constant $C$. Given any $x\in E(X)$, $a\in \pi_{e(A)}(x)$ and $b\in e(A)$ we have
$$d_\infty(x,b)\geq d_\infty(x,a)+d_\infty(a,b)-D.$$
\end{lemma}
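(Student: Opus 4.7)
The plan is to reduce the desired reverse triangle inequality to a bound on the ``gap" of a tripod in $E(X)$, and then use the quasiconvexity of $e(A)$ established in Lemma \ref{lem:contr_inj} to close that gap. Since $E(X)$ is injective, Lemma \ref{lem:tripod} applied to the three points $x,a,b\in E(X)$ produces a median point $p\in E(X)$ and geodesics $[x,p]$, $[a,p]$, $[b,p]$ whose appropriate concatenations are geodesics between the three points, yielding the three identities
$$d_\infty(x,a)=d_\infty(x,p)+d_\infty(p,a),\quad d_\infty(x,b)=d_\infty(x,p)+d_\infty(p,b),\quad d_\infty(a,b)=d_\infty(a,p)+d_\infty(p,b).$$
Adding the first and third and subtracting the second gives the clean cancellation
$$d_\infty(x,a)+d_\infty(a,b)-d_\infty(x,b)=2\,d_\infty(p,a),$$
so the statement reduces to producing a bound on $d_\infty(p,a)$ depending only on $C$.

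For this bound I will exploit the fact that the concatenation $[a,p]\cup[p,b]$ is a geodesic in $E(X)$ whose endpoints lie in $e(A)$. Since $A$ is $C$-strongly contracting and satisfies Gromov's 4-point condition with constant $C$, Lemma \ref{lem:contr_inj} provides a constant $D_0=D_0(C)$ and a point $c\in e(A)$ with $d_\infty(p,c)\leq D_0$. Combining this with the tripod identity $d_\infty(x,p)=d_\infty(x,a)-d_\infty(p,a)$ and the triangle inequality, we get
$$d_\infty(x,c)\leq d_\infty(x,p)+d_\infty(p,c)\leq d_\infty(x,a)-d_\infty(p,a)+D_0.$$
On the other hand, since $a\in\pi_{e(A)}(x)$ and $c\in e(A)$, the definition of coarse closest point projection gives $d_\infty(x,a)\leq d_\infty(x,c)+1$. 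Chaining these two inequalities yields $d_\infty(p,a)\leq D_0+1$, so we may take $D=2(D_0+1)$.

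The genuine content in the argument is producing a point of $e(A)$ close to the tripod center $p$, which is exactly what Lemma \ref{lem:contr_inj} supplies and where the 4-point condition on $A$ is used; granted that input, the rest is a short manipulation of tripod identities together with the definition of the coarse projection. I expect no further obstacle.
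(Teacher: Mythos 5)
Your proof is correct and follows essentially the same route as the paper, which simply notes that the argument of Lemma \ref{lem:one_geod} goes through verbatim with Lemma \ref{lem:contr_inj} supplying the point of $e(A)$ near the tripod centre in place of the Morse property. Your explicit tripod identity and the chain of inequalities bounding $d_\infty(p,a)$ are exactly the content of that argument (your last step reproves Lemma \ref{rmk:proj_geod} inline), so there is nothing to add.
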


\begin{proof}
Using Lemma \ref{lem:contr_inj}, the proof proceeds exactly as in Lemma \ref{lem:one_geod} (where the Morse property was only applied to geodesics). 
\end{proof}

The following lemma is not about injective hulls, but rather it provides a way of thinking about strong contraction of a hyperbolic subspace $Y \subset X$ in terms of Gromov's 4-point condition. Namely, for any $x,y\in X$, sets of the form $Y\cup\{x,y\}$ still satisfy Gromov's 4-point condition.

\begin{lemma}\label{lem:adding_points} For all $C\geq 0$ there exists $\delta\geq 0$ with the following property. Let $X$ be a geodesic metric space and let $Y$ be a $C$-strongly contracting subset satisfying Gromov's 4-point condition with constant $C$. Then for any $x,y \in X,$ the set $Y'=Y \cup \{x,y\}$ satisfies Gromov's 4-point condition with constant $\delta$.
\end{lemma}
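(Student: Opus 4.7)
The plan is to verify Gromov's 4-point inequality
$$d(p_1,p_2)+d(p_3,p_4)\leq \max\{d(p_1,p_3)+d(p_2,p_4),\,d(p_1,p_4)+d(p_2,p_3)\}+\delta$$
for every choice of four points $p_1,p_2,p_3,p_4\in Y'$. The case when all four points lie in $Y$ is the hypothesis, and the case when two or more of the $p_i$ coincide reduces trivially. So the work is in the two remaining cases: exactly one of the $p_i$ belongs to $\{x,y\}$, or two of them do. The key tool in both cases is a ``reverse triangle inequality'' for projections onto $Y$: for any $z\in X$, any $z'\in\pi_Y(z)$, and any $w\in Y$,
$$d(z,w)\geq d(z,z')+d(z',w)-D$$
for some $D=D(C)$. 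This is obtained by applying Lemma~\ref{lem:weak_contr_inj} in $E(X)$ and transferring back via the isometric embedding $e:X\to E(X)$; note $\pi_{e(Y)}(e(z))=e(\pi_Y(z))$ because $e$ preserves distances.

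For the one-outside case, say the four points are $\{x,u_1,u_2,u_3\}$ with $u_i\in Y$, fix $x'\in\pi_Y(x)$. The reverse triangle inequality gives $d(x,u_i)\approx d(x,x')+d(x',u_i)$ (up to an additive constant depending only on $C$), where of course $d(x,u_i)\leq d(x,x')+d(x',u_i)$ already holds. Substituting into all three pairing sums produces a common term $d(x,x')$ on every side, which cancels. What remains is exactly the 4-point inequality for $\{x',u_1,u_2,u_3\}\subseteq Y$, which holds by hypothesis. This settles the one-outside case.

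For the two-outside case, the four points are $\{x,y,u,v\}$ with $u,v\in Y$; pick $x'\in\pi_Y(x)$ and $y'\in\pi_Y(y)$. The reverse triangle inequality converts each of $d(x,u), d(x,v), d(y,u), d(y,v)$ into a sum of the form $d(x,x')+d(x',\cdot)$ or $d(y,y')+d(y',\cdot)$ up to additive error. For the remaining distance $d(x,y)$, we use only the ordinary triangle inequality $d(x,y)\leq d(x,x')+d(x',y')+d(y',y)$. Feeding all of these estimates into Gromov's inequality, and invoking the hypothesized 4-point condition on $Y$ applied to $\{x',y',u,v\}$, yields the desired bound with $\delta$ depending only on $C$.

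The main point to be careful about is the two-outside case: unlike the distances $d(x,u)$ etc., we do not have an approximate equality for $d(x,y)$, only an upper bound via $d(x',y')$. Fortunately this is exactly the direction we need, because the pairing $d(x,y)+d(u,v)$ appears on the \emph{left} of the 4-point inequality and must be bounded from above, while the pairings involving $d(x,u),d(x,v),d(y,u),d(y,v)$ appear on the right and can be handled by approximate equalities. Degenerate configurations with repeated points reduce to smaller cases or are immediate.
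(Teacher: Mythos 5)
Your one-outside case is fine, and your key tool (the reverse triangle inequality $d(z,w)\geq d(z,z')+d(z',w)-D$ for $z'\in\pi_Y(z)$, $w\in Y$) is correct and available, though deriving it by passing through $E(X)$ and Lemma \ref{lem:weak_contr_inj} is roundabout for a fact that holds directly in $X$ as a standard consequence of strong contraction. The problem is in the two-outside case, where your closing remark --- that the pairing $d(x,y)+d(u,v)$ ``appears on the left of the 4-point inequality and must be bounded from above'' --- is exactly where the argument breaks. Gromov's condition quantifies over all labelings of the four points, so you must also verify the inequality with $d(x,u)+d(y,v)$ on the left, and there the pairing $d(x,y)+d(u,v)$ sits inside the max on the \emph{right}. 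Running your own scheme: $d(x,u)+d(y,v)\leq d(x,x')+d(x',u)+d(y,y')+d(y',v)$, and the 4-point condition on $Y$ applied to $x',y',u,v$ may return $d(x',u)+d(y',v)\leq d(x',y')+d(u,v)+C$ (the other branch of the max need not be the larger one). To conclude you then need the \emph{lower} bound $d(x,y)\geq d(x,x')+d(x',y')+d(y',y)-\mathrm{const}$, which does not follow from the triangle inequality and is in fact false in general: take $x=y$ far from $Y$, so that $d(x,y)=0$ while $d(x,x')+d(y,y')$ is large.

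What rescues the argument is precisely the fact the paper cites: by strong contraction there is $C'=C'(C)$ such that whenever $d(x',y')\geq C'$ one has $d(x,y)\geq d(x,x')+d(x',y')+d(y',y)-C'$; and in the complementary sub-case $d(x',y')<C'$ one checks instead that $d(x',v)\geq d(y',v)-C'$ and $d(y',u)\geq d(x',u)-C'$, so the left-hand side is within a constant of the \emph{other} term $d(x,v)+d(y,u)$ of the max. So your overall strategy (project $x$ and $y$ to $Y$ and invoke the 4-point condition on $\{x',y',u,v\}$) is the right one and matches the paper's, but you are missing the genuine two-point reverse triangle inequality for $d(x,y)$ together with the case split on whether $d(x',y')$ is large; without it the two-outside case is not proved.
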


\begin{proof}
The lemma easily follows from the following well-known consequence of strong contraction (see e.g. the proof of \cite[Lemma 4.4]{charneysultan:contracting}, see also \cite{contractingrandom,ACGH, EikeZalloum}): There exists $C'=C'(C)$ such that for all $x,y\in X$ and $x'\in \pi_A(x),y'\in\pi_A(y)$ with $d(x',y')\geq C'$ we have
$$d(x,y)\geq d(x,x') + d(x',y')+d(y',y)-C'.$$
\end{proof}

The following lemma says that in $E(X)$ geodesics to (the image of) a strongly contracting and hyperbolic subspace of $X$ pass close to projection points.

\begin{lemma}
\label{lem:two_cases}
For all $C \geq 0$ there exists $D \geq 0$ with the following property. Let $X$ be a geodesic metric space and let $A\subseteq X$ be a $C$-strongly contracting subset satisfying Gromov's 4-point condition with constant $C$. Given any $f\in E(X)$, $a\in \pi_{e(A)}(f)$ and $b\in e(A)$, any geodesic in $E(X)$ from $f$ to $b$ passes within $D$ of $e(a)$.
\end{lemma}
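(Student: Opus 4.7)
The plan is to adapt the proof of Lemma \ref{lem:all_geod} to $E(X)$, with the Morseness of $Y$ exploited there replaced by Lemmas \ref{lem:contr_inj} (quasi-convexity of $e(A)$ in $E(X)$ for honest geodesics) and \ref{lem:weak_contr_inj} (reverse triangle inequality for projections to $e(A)$). First, I would dispatch the easy case $d_\infty(f,b) < d_\infty(f,a)$: Lemma \ref{lem:weak_contr_inj} applied to $f$ with $b\in e(A)$ forces $d_\infty(a,b) \leq D_1$ for some $D_1=D_1(C)$, so the endpoint $b$ of any geodesic $\alpha$ is itself within $D_1$ of $a$, and we are done.

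In the main case $d_\infty(f,b) \geq r := d_\infty(f,a)$, I would mirror the original argument. Let $p\in\alpha$ be the point with $d_\infty(f,p)=r$, let $q\in\alpha_{fp}$ minimise $d_\infty(a,\cdot)$, and connect $a$ to $q$ by a bicombing geodesic $[a,q]$ of $E(X)$. Two applications of Lemma \ref{lem:QRT}, together with the ball-containment argument from Lemma \ref{lem:all_geod} (which uses only that the bicombing geodesic $[a,q]$ lies in the closed ball of radius $r$ around $f$, so no point on $[a,q]$ is closer to $b$ than $p$), show that $\gamma := [a,q]\cup \alpha_{qb}$ is a $(9,0)$-quasi-geodesic in $E(X)$ from $a$ to $b$, both of which lie in $e(A)$, and which passes through $p$.

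The main obstacle is to show that $p\in\gamma$ lies at uniformly bounded distance from $e(A)$. In Lemma \ref{lem:all_geod} this was immediate from the Morse gauge of $Y$ applied to $(9,0)$-quasi-geodesics, but here Lemma \ref{lem:contr_inj} only controls honest geodesics joining points of $e(A)$. I would upgrade to $(9,0)$-quasi-geodesics by exploiting that $e(A)$, being isometric to $A$, inherits Gromov's 4-point condition with constant $C$ and is quasi-convex in $E(X)$ by Lemma \ref{lem:contr_inj}; together these make $e(A)$ coarsely hyperbolic inside $E(X)$, so a standard projection-based argument, with quantitative control supplied by Lemma \ref{lem:adding_points}, ensures that any $(9,0)$-quasi-geodesic between points of $e(A)$ stays within a uniform neighborhood of $e(A)$ depending only on $C$.

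Finally, once $d_\infty(p,e(A)) \leq D'$ with $D'=D'(C)$, the proof closes exactly as in Lemma \ref{lem:all_geod}: pick $p'\in e(A)$ with $d_\infty(p,p')\leq D'$, apply Lemma \ref{lem:weak_contr_inj} to $f$ and $p'\in e(A)$ to get $d_\infty(f,p') \geq r + d_\infty(a,p') - D_1$, and combine with $d_\infty(f,p') \leq d_\infty(f,p) + d_\infty(p,p') = r + D'$ to conclude $d_\infty(a,p) \leq D_1 + 2D'$, yielding the desired constant $D = D(C)$.
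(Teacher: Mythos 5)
Your framing of the easy case, the tripod/bicombing construction of the $(9,0)$-quasi-geodesic $\gamma=[a,q]\cup\alpha_{qb}$ through $p$ (using that $E(X)$ is itself injective, hence carries a conical bicombing), and the final step via Lemma \ref{lem:weak_contr_inj} are all fine. The genuine gap is the middle claim: that any $(9,0)$-quasi-geodesic in $E(X)$ with endpoints on $e(A)$ stays in a uniform neighborhood of $e(A)$. This does not follow from ``quasi-convexity plus internal hyperbolicity'': a straight line in $\mathbb{R}^2$ is convex and $0$-hyperbolic as a metric space, yet quasi-geodesics between its points wander arbitrarily far, so Lemma \ref{lem:contr_inj} together with the $4$-point condition on $A$ cannot yield Morseness of $e(A)$. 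Nor does Lemma \ref{lem:adding_points} help as invoked: it adjoins points of $X$ to $A$, whereas the points of your quasi-geodesic $\gamma$ are general elements of $E(X)$, and extending that lemma to such points would require already knowing $e(A)$ is strongly contracting in $E(X)$ --- which is circular. Worse, since $E(X)$ is injective, Corollary \ref{cor:9_0} shows that your intermediate claim is \emph{equivalent} to $e(A)$ being strongly contracting in $E(X)$, i.e.\ to Theorem \ref{thm:contracting_survives} itself, the very statement this lemma is a step toward. So the claim cannot be dispatched as a standard projection argument; it is the whole difficulty.

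The paper avoids this by never leaving the world of metric forms: it takes the point $g$ on the geodesic at distance $f(a)$ from $f$, and bounds $g(a)=d_\infty(g,e(a))$ directly. The key device is item 3 of Lemma \ref{lem:summary}, which produces \emph{witnesses $x,y\in X$} (not in $E(X)$) with $g(a)\approx d(x,a)-g(x)$ and $f(b)\approx d(y,b)-f(y)$; because these witnesses lie in $X$, Lemma \ref{lem:adding_points} legitimately applies to $\{a,b,x,y\}$, and the two branches of the $4$-point condition each collapse the sum $g(a)+f(b)$ to $\lesssim f(b)$, forcing $g(a)\approx 0$. If you want to repair your argument, you should replace the Morseness claim with this kind of extremality computation, or restructure so that the only paths you need to control are honest geodesics between points of $e(A)$, which is what Lemma \ref{lem:contr_inj} actually provides.
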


\begin{proof}
Consider a geodesic as in the statement and let $g$ be a point along that geodesic with $d_\infty(f,g)=d_\infty(f,e(a))$, so that $d_\infty(g,e(b))\approx d_\infty(e(a),e(b))$ by Lemma \ref{lem:weak_contr_inj}. These can be rewritten as $d_\infty(f,g)=f(a)$ and $g(b)\approx d(a,b)$, using item 2 of Lemma \ref{lem:summary}. Our aim is to bound $g(a)$. Using items 3,4 of Lemma \ref{lem:summary}, there exist $x,y\in X$ such that 
$$g(a)+f(b)\approx d(x,a)-g(x)+ d(y,b)-f(y).$$

Since $A$ is strongly contracting and $x,y \in X,$ the set $A'=A \cup \{x,y\}$ satisfies the Gromov's 4-point condition by Lemma \ref{lem:adding_points}. Since $X$ isometrically embeds in $E(X),$ the set $e(A')$ also satsfies Gromov's 4-point condition.

Applying Gromov's 4-point condition on the points $a,b,x,y$ in the above coarse equality, we either get

\begin{align*}
    g(a)+f(b)&\approx d(x,a)+d(y,b)-g(x)-f(y)\\
    &\lesssim d(x,b)+d(y,a)-g(x)-f(y)\\
    &\leq g(b)+f(a)\\
    &\approx d(a,b)+f(a)\\
    &\approx f(b),
\end{align*}

or 

\begin{align*}
    g(a)+f(b)&\approx d(x,a)+d(y,b)-g(x)-f(y)\\
    &\lesssim d(a,b)+d(x,y)-g(x)-f(y)\\
    &\leq g(y)-f(y)+d(a,b)\\
    &\leq f(a)+d(a,b)\\
    &\approx f(b),
\end{align*}
where the fourth inequality holds since $g(y)\leq f(y)+d_\infty(f,g)=f(y)+f(a)$ by the triangle inequality and the defining property of $g$.
\end{proof}

\begin{theorem}\label{thm:contracting_survives}
For all $C$ there exists $D$ with the following property. Let $X$ be a geodesic metric space and let $A\subseteq X$ be a $C$-strongly contracting subset satisfying Gromov's 4-point condition with constant $C$. Then $e(A)$ is $D$-strongly contracting in $E(X)$.
\end{theorem}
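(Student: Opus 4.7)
The plan is to run the same three-step argument used in Theorem \ref{thm:main_body} (originating in \cite{charneysultan:contracting}), only now inside the injective hull $E(X)$, with Lemma \ref{lem:two_cases} playing the role that Lemma \ref{lem:all_geod} played in the injective case. The key point is that Lemma \ref{lem:two_cases} already packages everything needed: geodesics in $E(X)$ ending on $e(A)$ are forced to pass uniformly close to projection points onto $e(A)$.

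More concretely, I would fix a ball $B \subseteq E(X)$ centred at $x$ with radius $r$ and disjoint from $e(A)$, pick $y \in B$, choose $x' \in \pi_{e(A)}(x)$ and $y' \in \pi_{e(A)}(y)$, and set $R := d_\infty(x,x')$, so that $R \geq r$. Let $\delta$ denote the constant from Lemma \ref{lem:two_cases} applied with input constant $C$. Since $E(X)$ is injective, by Proposition \ref{prop:bicombing} it admits a conical geodesic bicombing; let $[y,x']$ denote the corresponding bicombing geodesic. By the conical property, every point on $[y,x']$ lies within $\max\{d_\infty(x,y),d_\infty(x,x')\} \leq R$ of $x$. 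Applying Lemma \ref{lem:two_cases} to $(y, x', y')$ produces a point $z \in [y,x']$ with $d_\infty(z,y') \leq \delta$, so that
\[
d_\infty(x,y') \leq d_\infty(x,z) + d_\infty(z,y') \leq R + \delta.
\]

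For the reverse bound, I would apply Lemma \ref{lem:two_cases} to $(x, y', x')$: on any geodesic from $x$ to $y'$ some point $w$ satisfies $d_\infty(w,x') \leq \delta$. Consequently,
\[
d_\infty(x,y') = d_\infty(x,w) + d_\infty(w,y') \geq (R - \delta) + \bigl(d_\infty(x',y') - \delta\bigr).
\]
Combining the two inequalities yields $d_\infty(x',y') \leq 3\delta$, whence $\mathrm{diam}\bigl(\pi_{e(A)}(B)\bigr) \leq 6\delta$. Setting $D := 6\delta$ gives the desired strong contraction constant, depending only on $C$.

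The genuine mathematical content — replacing geodesic convexity of $A$ and Morseness in the injective setting with the analogous statements in the (non-injective in general) ambient space $X$ — has already been carried out in Lemmas \ref{lem:contr_inj}, \ref{lem:weak_contr_inj}, \ref{lem:adding_points}, and \ref{lem:two_cases}. The only subtlety in the final assembly is that the argument for the upper bound on $d_\infty(x,y')$ needs a geodesic from $y$ to $x'$ whose points cannot stray far from $x$; this is precisely why the conical bicombing on $E(X)$ is invoked, exactly as in the proof of Theorem \ref{thm:main_body}. Once that choice is made, the whole argument is a transcription of the Charney–Sultan triangle-inequality trick, so no further obstacle arises.
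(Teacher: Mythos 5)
Your proposal is correct and is exactly the paper's argument: the paper's proof of Theorem \ref{thm:contracting_survives} consists of the single line ``Same as the proof of Theorem \ref{thm:main_body}, using Lemma \ref{lem:two_cases} instead of Lemma \ref{lem:all_geod},'' and your write-up is a faithful transcription of that substitution, including the correct use of the conical bicombing on the injective space $E(X)$ to keep the geodesic $[y,x']$ inside the ball of radius $R$ about $x$.
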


\begin{proof}
Same as the proof of Theorem \ref{thm:main_body}, using Lemma \ref{lem:two_cases} instead of Lemma \ref{lem:all_geod}.
\end{proof}

In particular, Theorem \ref{thm:contracting_survives} above recovers the following statement due to \cite{LANG2013}.
\begin{corollary}
\label{cor:Lang}
If $X$ is a hyperbolic metric space with respect to the Gromov's 4 point condition, then $X$ is dense in $E(X)$.
\end{corollary}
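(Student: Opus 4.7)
The plan is to derive Corollary \ref{cor:Lang} (which should read ``coarsely dense'' in view of the context) as a short consequence of Theorem \ref{thm:contracting_survives} applied in the degenerate case $A=X$. Every metric space is \emph{vacuously} strongly contracting as a subset of itself, since no ball in $X$ can be disjoint from $X$; so the only real hypothesis to verify is Gromov's 4-point condition, which is precisely hyperbolicity. Feeding this into Theorem \ref{thm:contracting_survives} produces a constant $D$, depending only on the 4-point constant, such that $e(X)$ is $D$-strongly contracting in $E(X)$. In the equivalent form of Lemma \ref{lem:weak_contr_inj}, this gives the reverse triangle inequality
\[ d_\infty(f, e(b)) \geq d_\infty(f, e(a)) + d(a, b) - D \]
for every $f \in E(X)$, every $a\in X$ with $e(a)\in \pi_{e(X)}(f)$, and every $b\in X$.

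To upgrade this to coarse density, I would pair it with the defining ``witness'' property of elements of $E(X)$, namely item (3) of Lemma \ref{lem:summary}. Given $f\in E(X)$, fix a projection point $e(a)$ of $f$ to $e(X)$ and, for $\epsilon>0$, apply the witness property at $a\in X$ to produce $y\in X$ with $f(a)+f(y)\le d(a,y)+\epsilon$. Rewriting via $f(\cdot)=d_\infty(f,e(\cdot))$ (item (2) of Lemma \ref{lem:summary}) gives an upper bound on $d_\infty(f,e(y))$ whose $d(a,y)$ term exactly cancels against the one in the reverse triangle inequality applied with $b=y$, leaving $2\,d_\infty(f,e(a))\le D+\epsilon$. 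Sending $\epsilon\to 0$ yields $d_\infty(f,e(X))\le D/2$, which is the desired coarse density of $e(X)$ in $E(X)$.

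The only step that requires any insight is recognising that Theorem \ref{thm:contracting_survives} still says something substantive when $A=X$: the strong contraction hypothesis is vacuous there, but its conclusion, reformulated as the reverse triangle inequality of Lemma \ref{lem:weak_contr_inj}, is a genuine geometric constraint on how $E(X)$ sits around $e(X)$. Once that is noticed, no further obstacle arises; the argument is a clean cancellation between the two ``parallel'' bounds on $d_\infty(f,e(y))$, and the resulting bound $D/2$ on the coarse density constant is explicit in terms of the hyperbolicity constant of $X$.
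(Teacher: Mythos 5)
Your proposal is correct and follows essentially the same route as the paper's own proof: apply Theorem \ref{thm:contracting_survives} with $A=X$ (where strong contraction is vacuous), combine the witness property of Lemma \ref{lem:summary}(3) at a projection point with the reverse triangle inequality of Lemma \ref{lem:weak_contr_inj}, and cancel to get $2\,d_\infty(f,e(X))\lesssim 0$. Your explicit remarks that the strong-contraction hypothesis holds vacuously for $A=X$ and that ``dense'' should read ``coarsely dense'' are both accurate and merely make explicit what the paper leaves implicit.
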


\begin{proof}
Let $f \in E(X)$ and let $e(a) \in \pi_{e(X)}(f).$ By Lemma \ref{lem:summary}, we have $f(a)=d_\infty(e(a), f)$ and there exists a point $b \in X$ with $f(a)+f(b) \lesssim d(a,b).$ Since $e(X)$ is $D$-strongly contracting, we have $f(b)=d_\infty(f,e(b)) \approx d_\infty(f,e(a))+d_\infty(e(b),e(a))=f(a)+d(a,b)$. Finally, since $f(a)+f(b) \lesssim d(a,b) \approx f(b)-f(a)$ we get that $f(a) \lesssim -f(a)$ or $f(a) \approx 0.$
\end{proof}

\bibliography{bio}{}
\bibliographystyle{alpha}
\end{document}